\newcommand{\I}{{\mathfrak{I}}}
\newcommand{\R}{{\mathbb{R}}}
\newcommand{\T}{{\mathbb{T}}}
\newcommand{\Z}{{\mathbb{Z}}}
\newcommand{\op}{\textit{op}}
\newcommand{\qtq}[1]{\quad\text{#1}\quad}
\newcommand{\eps}{\varepsilon}
\newcommand{\vk}{\varkappa}
\newcommand{\ddt}{\frac{d\ }{dt}}
\DeclareMathOperator{\sgn}{sgn}
\DeclareMathOperator{\tr}{tr}
\newcommand{\HH}{\mathsf{H}}
\newtheorem{theorem}{Theorem}[section]
\newtheorem{prop}[theorem]{Proposition}
\newtheorem{lemma}[theorem]{Lemma}
\theoremstyle{definition}
\newtheorem{definition}[theorem]{Definition}
\theoremstyle{remark}
\begin{document}

\title[A priori bounds and equicontinuity for ILW]{A priori bounds and equicontinuity of orbits\\for the intermediate long wave equation}
\author{B. Harrop-Griffiths, R. Killip, and M. Vi\c{s}an}

\address
{Benjamin Harrop-Griffiths\\
Department of Mathematics and Statistics\\
Georgetown University, DC 20057, USA}
\email{benjamin.harropgriffiths@georgetown.edu}

\address
{Rowan Killip\\
CEREMADE, CNRS\\Universit\'e Paris Dauphine--PSL\\ Place du
Mar\'echal de Lattre de Tassigny\\ 75016 Paris\\ France \& Department
of Mathematics\\ University of California\\Los Angeles\\CA 90095\\USA}
\email{killip@ceremade.dauphine.fr}

\address
{Monica Vi\c{s}an\\
Department of Mathematics\\
University of California, Los Angeles, CA 90095, USA}
\email{visan@math.ucla.edu}

\begin{abstract}
We prove uniform-in-time a priori $H^s$ bounds for solutions to the intermediate long wave equation posed both on the line and on the circle, covering the range $-\frac12<s\leq0$.   Additionally, we prove that the set of orbits emanating from a bounded and equicontinuous set in $H^s$ is also bounded and equicontinuous in $H^s$.  Our proof is based on the identification of a suitable Lax pair formulation for the intermediate long wave equation.
\end{abstract}

\maketitle

%%%%%%%%%%%%%%%%%%%%%%%%%%%%%%%%%%%%%%%%%%%%%%%%%%%%%%%%%%%%%%%%%%%%%%%%%%%%%%%%%%%%%%%%%%%%%%%%%%%%%%%%%%%%%%%%%%%%%%%%%%%%%%
\section{Introduction}
%%%%%%%%%%%%%%%%%%%%%%%%%%%%%%%%%%%%%%%%%%%%%%%%%%%%%%%%%%%%%%%%%%%%%%%%%%%%%%%%%%%%%%%%%%%%%%%%%%%%%%%%%%%%%%%%%%%%%%%%%%%%%%

The intermediate long wave equation describes the propagation of internal waves at the interface of a stratified fluid of total depth $h>0$.  It takes the form
\begin{align}\tag{ILW}\label{ILW}
\tfrac{d}{dt}q = - Tq'' - \tfrac1h q' -2qq',
\end{align}
with $T$ denoting the  Fourier multiplier operator with symbol
\begin{equation}\label{T defn}
\widehat {Tf}(\xi) = i\coth(h\xi) \widehat f(\xi).
\end{equation}
The unknown function $q:\R\times\mathcal M\to \R$ represents the elevation of the interface relative to equilibrium.  In this paper, we treat the cases $\mathcal M =\R$ and $\mathcal M =\T=\R/\Z$ on equal footing.

The equation \eqref{ILW} can be viewed as an `intermediate' equation between the Korteweg--de Vries equation (which arises in the shallow water limit $h\to 0$),
\begin{align}\tag{KdV}\label{KdV}
\tfrac{d}{dt}q = -q''' - 2qq',
\end{align}
and the Benjamin--Ono equation (which arises in the deep water limit $h\to \infty$),
\begin{align}\tag{BO}\label{BO}
\tfrac{d}{dt}q = \HH q'' -2qq'.
\end{align}
Here $\HH$ denotes the Hilbert transform, $\widehat {\HH f}(\xi) = -i\sgn(\xi) \widehat f(\xi)$.

Smooth solutions to \eqref{ILW} conserve the momentum,
\begin{equation}\label{moment}
M(q) = \tfrac12 \int q^2(t,x)\, dx,
\end{equation}
and the Hamiltonian,
\begin{equation}
H(q) = \tfrac12 \int q(t,x) (Tq')(t,x) \, dx + \tfrac1{2h} \int q^2(t,x) \, dx + \tfrac13 \int q^3(t,x)\, dx. 
\end{equation}

The central goal of this paper is to prove both a priori norm bounds and equicontinuity of orbits for solutions to \eqref{ILW}.  Evidently, this is predicated on the existence of solutions.  For this reason, we will now give a brief account of the existing results on the well-posedness problem for this model.   For a more comprehensive discussion of \eqref{ILW}, we recommend the recent book \cite{KSbook}.  

Global well-posedness of \eqref{ILW} in $H^{s}(\mathcal M)$ for $s>\frac32$ was proven in both geometries already in \cite{ABFS}.  This guarantees that smooth initial data leads to global smooth solutions.   Here and throughout the paper, smooth means $H^\infty(\mathcal M)$. 

In \cite{MV}, Molinet and Vento proved global well-posedness of \eqref{ILW} in $H^{1/2}(\mathcal M)$ via the energy method. In \cite{MPV}, Molinet, Pilod, and Vento combined the energy method in \cite{MV} with Strichartz estimates and proved local well-posedness of \eqref{ILW} in $H^s(\R)$ for $s>\frac14$.

By treating \eqref{ILW} as a perturbation of the Benjamin--Ono equation and employing ideas from \cite{IT}, Ifrim and Saut \cite{IS} proved global well-posedness for \eqref{ILW} in $L^2(\mathcal \R)$.  Chapouto, Li, Oh, and Pilod \cite{CLOP} proved global well-posedness for \eqref{ILW} in $L^2(\mathcal M)$, treating both geometries in a uniform manner.  They likewise treated \eqref{ILW} as a perturbation of Benjamin--Ono, employing the  gauge transform of Tao and ideas from \cite{MP}.  Using a similar perturbative approach, but this time relying on Birkhoff coordinates, Gassot and Laurens \cite{GL} very recently proved global well-posedness in \(H^s(\T)\) for all \(s>-\frac12\).  This result is most likely sharp in the scale of Sobolev spaces: It was proved in \cite{CFLOP} that \eqref{ILW} is ill-posed in \(H^s(\mathcal M)\) for all \(s<-\frac12\) and in \cite{MR4652410} that \eqref{BO} is ill-posed in \(H^{-\frac12}(\T)\).

Chapouto, Forlano, Li, Oh, and Pilod \cite{CFLOP} obtained growth bounds for smooth solutions to \eqref{ILW} in Sobolev spaces $H^s(\mathcal M)$ for $-\frac12<s<0$. They achieved this by treating \eqref{ILW} as a perturbation of \eqref{BO} and employing the uniform-in-time a priori bounds for solutions to \eqref{BO} proved in \cite{KLV}. Specifically, they showed that for all $-\frac12<s<0$ and $\eps>0$, there exist constants $A_{h,s,\eps} \simeq h^{-2} (1+h^{-(\frac12+|s|+\eps)})$ and $C_s>0$, so that smooth solutions to \eqref{ILW} satisfy
\begin{align}\label{grow bdds}
\|q(t)\|_{H^s} \leq C_s^{1+|s|} e^{A_{h,s,\eps}|t|} \bigl( 1+2C_se^{A_{h,s,\eps}|t|} \|q(0)\|_{H^s}\bigr)^{\frac{2|s|}{1-2|s|}}\|q(0)\|_{H^s}
\end{align}
for all $t\in \R$.

Our first result in this paper is an improvement over \eqref{grow bdds} in the form of \emph{uniform-in-time} a priori bounds for smooth solutions to \eqref{ILW}.  Moreover, we prove for the first time that the set of orbits emanating from a bounded and equicontinuous set in $H^s(\mathcal M)$ is also bounded and equicontinuous in $H^s(\mathcal M)$:

\begin{theorem}\label{T1} Fix $0<h_0<\infty$ and $-\frac12< s\leq 0$.  For any smooth solution $q$ to \eqref{ILW}, we have
\begin{align}\label{a priori bdds}
\| q(0) \|_{H^s} \bigl( 1 +  \| q(0) \|_{H^s}\bigr)^{-2|s|} \lesssim \| q(t) \|_{H^s} \lesssim \| q(0) \|_{H^s} \bigl( 1 +  \| q(0) \|_{H^s}\bigr)^{\frac{2|s|}{1-2|s|}},
\end{align}
uniformly for all \(h\geq h_0\).

Moreover, if $Q\subset H^\infty(\mathcal M)$ is bounded and equicontinuous in $H^{s}(\mathcal M)$ and \(Q_h^*\) denotes the set of orbits emanating from $Q$ under the \eqref{ILW} flow, the set \(\bigcup_{h\geq h_0}Q_h^*\) forms an equicontinuous set in $H^{s}(\mathcal M)$.
\end{theorem}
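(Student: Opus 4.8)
The plan is to exploit the integrable structure of \eqref{ILW} through the Lax pair formulation we develop, adapting the strategy used for \eqref{BO} in \cite{KLV}. Writing $L_h(q)$ for the Lax operator and $P_h(q)$ for its partner, so that smooth solutions obey $\ddt L_h(q(t))=[P_h(q(t)),L_h(q(t))]$, the fundamental conserved object is a (suitably regularized) perturbation determinant $\alpha(\kappa;q)$ of $L_h(q)$ relative to $L_h(0)$, evaluated at a large real spectral parameter $\kappa$, together with the real-valued, sign-definite functional $\beta(\kappa;q)$ extracted from $\log\alpha(\kappa;q)$. The preliminary task is to make sense of these: one needs operator-norm and Schatten-norm bounds on the relevant products of the free resolvent $(L_h(0)+\kappa)^{-1}$ with multiplication by $q$ --- valid at regularity $s>-\tfrac12$, in both geometries $\mathcal M=\R$ and $\mathcal M=\T$, and, crucially, uniformly for $h\ge h_0$. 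The $h$-uniformity rests on the elementary estimates $|\coth(h\xi)|\lesssim 1+(h|\xi|)^{-1}$ and $|\coth(h\xi)-\sgn\xi|\lesssim e^{-h|\xi|}$, which let the symbol $i\coth(h\xi)$ of $T$ be treated like the Benjamin--Ono symbol $i\sgn\xi$ uniformly as $h\to\infty$. With these bounds, the Fredholm expansion of $\alpha(\kappa;q)$ converges once $\kappa$ exceeds a threshold $\kappa_*\simeq\langle\|q\|_{H^s}\rangle^{\theta_s}$; then $\alpha(\kappa;\cdot)$ --- hence $\beta(\kappa;\cdot)$ for each such $\kappa$ --- is well-defined and conserved, the latter by the standard fact that the Lax identity forces spectral functionals of $L_h$ to be time-independent.

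The heart of the matter is a two-sided comparison between $\|q\|_{H^s}^2$ and a conserved superposition of the $\beta(\kappa;q)$. Expanding $\beta(\kappa;q)=\sum_{\ell\ge 2}\beta_\ell(\kappa;q)$ into homogeneous $\ell$-linear pieces, one computes the quadratic term $\beta_2(\kappa;q)=\int m_\kappa(\xi)\,|\widehat q(\xi)|^2\,d\xi$ (a sum on $\T$), with an explicit Fourier symbol $m_\kappa(\xi)\approx(|\xi|+\kappa)^{-1}$; choosing the weight $w_s(\kappa)$ (a fixed power of $\kappa$) so that $\int_{\kappa_0}^\infty w_s(\kappa)\,m_\kappa(\xi)\,d\kappa\approx\langle\xi\rangle^{2s}$ for every $\xi$, the conserved quantity $\mathcal A_s(q):=\int_{\kappa_0}^\infty w_s(\kappa)\,\beta(\kappa;q)\,d\kappa$ reconstructs $\|q\|_{H^s}^2$ up to errors; here one must take $\kappa_0\gtrsim\kappa_*$ so that the whole range of integration lies where $\alpha(\kappa;\cdot)$ is defined. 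The cubic and higher contributions $\int w_s\,\beta_\ell$ are controlled by the multilinear estimates above and are lower order in $\|q\|_{H^s}$. This yields, for smooth $q$, a comparison $\|q\|_{H^s}^2\lesssim\mathcal A_s(q)+(\text{lower order})$ and $\mathcal A_s(q)\lesssim\|q\|_{H^s}^2\,\langle\|q\|_{H^s}\rangle^{\sigma_s}$ whose degradation is dictated entirely by the threshold $\kappa_*$. Since $\mathcal A_s$ is conserved, running these two inequalities against $\mathcal A_s(q(t))=\mathcal A_s(q(0))$ --- the interdependence with the a priori bound itself being closed by a short continuity-in-time argument --- bounds $\|q(t)\|_{H^s}$ in terms of $\|q(0)\|_{H^s}$, and optimizing $\theta_s$ produces exactly the exponents $-2|s|$ and $\tfrac{2|s|}{1-2|s|}$ of \eqref{a priori bdds}; all constants are uniform in $h\ge h_0$ because every estimate entering them is. (At $s=0$ the bound \eqref{a priori bdds} is just the conservation of the momentum $M$; the content lies in $-\tfrac12<s<0$.)

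For the equicontinuity statement we first invoke the a priori bound just proved: it shows $\bigcup_{h\ge h_0}Q_h^*$ is bounded in $H^s$. It then remains to prove a uniform modulus, namely $\sup_{h\ge h_0}\sup_{q_0\in Q}\sup_{t\in\R}\|P_{>N}\,q(t)\|_{H^s}\to0$ as $N\to\infty$, where $q$ denotes the \eqref{ILW} solution with $q(0)=q_0$. Here we use the \emph{truncated} conserved quantity $\mathcal A_s^{>N}(q):=\int_N^\infty w_s(\kappa)\,\beta(\kappa;q)\,d\kappa$, and two facts about it. First, since $w_s m_\kappa\ge0$, its quadratic part obeys the clean lower bound $\int_N^\infty w_s(\kappa)\,\beta_2(\kappa;q)\,d\kappa\approx\int\max\{|\xi|,N\}^{2s}\,|\widehat q(\xi)|^2\,d\xi\gtrsim\|P_{>N}q\|_{H^s}^2$, so $\|P_{>N}q\|_{H^s}^2\lesssim\mathcal A_s^{>N}(q)+\int_N^\infty w_s\,|\beta_{\ge3}(\kappa;q)|\,d\kappa$, and for $N$ large relative to $\|q\|_{H^s}$ the last term is $o_N(1)$ uniformly on $H^s$-bounded sets. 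Second, for $q\in Q$ the same formula gives $\mathcal A_s^{>N}(q)\lesssim\|P_{>N}q\|_{H^s}^2+N^{2s}\|P_{\le N}q\|_{L^2}^2+o_N(1)$, and all three terms tend to $0$ uniformly over $Q$: the first by equicontinuity, the third by the multilinear bounds together with boundedness of $Q$, and the (seemingly dangerous) residual $N^{2s}\|P_{\le N}q\|_{L^2}^2$ --- which equals $\sum_{j\ge0}2^{-2j|s|}\|P_{\sim 2^{-j}N}q\|_{H^s}^2$ up to constants --- by making it $<\eps$ via first choosing $j$ large (using boundedness of $Q$) and then $N$ large (using equicontinuity of $Q$). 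Combining the two facts with conservation of $\mathcal A_s^{>N}$ and the uniform $H^s$-bound on the orbit from the first part gives $\|P_{>N}q(t)\|_{H^s}^2\lesssim\mathcal A_s^{>N}(q(t))+o_N(1)=\mathcal A_s^{>N}(q(0))+o_N(1)\lesssim o_N(1)$ uniformly in $t$, in $q_0\in Q$, and in $h\ge h_0$, which is the asserted equicontinuity.

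The main obstacle is the middle step: establishing the two-sided comparison with the \emph{sharp} exponents. This requires the multilinear bounds on the pieces $\beta_\ell(\kappa;q)$ to hold with constants uniform in $h\ge h_0$, valid simultaneously for $\mathcal M=\R$ and $\mathcal M=\T$, and robust enough to survive the near-critical regime $s\downarrow-\tfrac12$, where the quadratic form $\beta_2$ only barely controls $\|q\|_{H^s}$. One must, in particular, keep careful track of the interplay between the degeneration of $i\coth(h\xi)$ at low frequencies (both as $h\to\infty$ and as $\xi\to0$) and the low-frequency contribution to the $H^s$ norm, and must take the regularized determinant of high enough order that all traces and Schatten norms entering the expansion of $\log\alpha(\kappa;q)$ are finite down to $s>-\tfrac12$. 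By contrast, once these ingredients are in place, the remaining steps --- the reconstruction of the norm, the continuity-in-time bootstrap, and the equicontinuity argument above --- are expected to follow the Benjamin--Ono template of \cite{KLV} with the $h$-uniform modifications already developed for the a priori bounds.
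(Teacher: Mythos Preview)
Your overall architecture---Lax pair, renormalized perturbation determinant $\alpha(\kappa;q)$, expansion into homogeneous pieces with the quadratic term $\beta_2$ carrying the norm, integration of $\beta(\kappa;q)$ against a power weight $\kappa^{2s}$ to reconstruct $\|q\|_{H^s}^2$, and a continuity argument to close the bootstrap---matches the paper's approach for the a priori bounds \eqref{a priori bdds} when $-\tfrac12<s<0$.  (One minor correction: the quadratic symbol is not $(|\xi|+\kappa)^{-1}$ but rather $F(\xi;\kappa,h)\simeq(\xi^2+\kappa^2)^{-1/2}\log(4+\xi^2/\kappa^2)$ in the regime $h\geq h_0$, $\kappa\gtrsim 1/h_0$; the logarithm is harmless for the integration.)  For equicontinuity at $-\tfrac12<s<0$, your truncated-integral argument is correct but more elaborate than necessary: the paper simply records that the conservation argument yields $\|q(t)\|_{H^s_\kappa}\lesssim\|q(0)\|_{H^s_\kappa}$ for all $\kappa\geq\kappa_0$ and then invokes the characterization that equicontinuity in $H^s$ is equivalent to $\|q\|_{H^s_\kappa}\to 0$ uniformly as $\kappa\to\infty$.

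There is, however, a genuine gap at $s=0$.  Your scheme requires $\int_{\kappa_0}^\infty w_s(\kappa)\,m_\kappa(\xi)\,d\kappa\simeq\langle\xi\rangle^{2s}$ with $w_s$ a power of $\kappa$; for $s=0$ no such power exists (the integral $\int_{\kappa_0}^\infty\kappa^0 F(\xi;\kappa,h)\,d\kappa$ diverges), so the conserved quantity $\mathcal A_0^{>N}$ is not defined.  Even ignoring this, your treatment of the residual term fails at $s=0$: $N^{2s}\|P_{\leq N}q\|_{L^2}^2$ becomes $\|P_{\leq N}q\|_{L^2}^2$, which converges to $\|q\|_{L^2}^2$, not to zero, and your dyadic sum $\sum_{j\geq 0}2^{-2j|s|}\|P_{\sim 2^{-j}N}q\|_{H^s}^2$ loses its geometric decay.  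The paper handles $L^2$-equicontinuity by a separate and more delicate argument: it proves the pointwise limit $\kappa F(\xi;\kappa,h)\to\tfrac1{2\pi}$ (uniformly on compacta in $\xi$ and for $h\geq h_0$) together with a quantitative bound $\kappa F(\xi;\kappa,h)\leq\tfrac1{4\pi}$ for $|\xi|\geq A\kappa$, and then shows that the conserved combination $M(q)-\pi\kappa\|\sqrt{R_0(\kappa)}\,q\,\sqrt{R_0(\kappa)}\|_{\mathfrak I_2}^2$ (plus cubic-and-higher corrections) controls $\|P_{\geq A\kappa}q\|_{L^2}^2$.  This pairing of momentum with a single large-$\kappa$ value of the quadratic term, rather than an integrated superposition, is the missing idea in your proposal.
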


We recall that the equation \eqref{ILW} converges to \eqref{BO} in the deep water limit \(h\to \infty\).  This convergence was proved to hold locally in time for solutions in \(H^s(\R)\) when \(s\geq 0\) in \cite{CLOP} and in \(H^s(\T)\) when \(s>-\frac12\) in \cite{GL}; see also \cite{ABFS,MR4866612,MR4752554}.  Theorem~\ref{T1} proves that both a priori bounds and preservation of equicontinuity hold uniformly in time in the deep water limit, providing strong evidence that \eqref{ILW} should converge locally in time to \eqref{BO} as \(h\to\infty\) also in \(H^s(\R)\) for all \(s>-\frac12\).

To address the shallow water regime $h\to 0$, if \(q\) is a solution of \eqref{ILW} we let
\[
u(t,x) := \tfrac3hq\bigl(\tfrac{3t}h,x\bigr).
\]
Then \(u\) is a solution of
\begin{equation}\label{ILW'}\tag{ILW$'$}
\tfrac{d}{dt}u = - \tfrac3hTu'' - \tfrac3{h^2} u' -2uu',
\end{equation}
which formally converges to \eqref{KdV} in the shallow water limit \(h\to0\).  The validity of this limit was proved locally in time in \(H^s(\mathcal M)\) for all \(s>\frac12\) in \cite{MR4752554}, improving the previous result of \cite{ABFS}.  We also mention that convergence of the Gibbs measures in both the deep and shallow water limits were considered in \cite{MR4878416}.

Our second result establishes an analog of Theorem~\ref{T1} that holds uniformly in the shallow water limit:

\begin{theorem}\label{T2}
Fix $-\frac12< s <0$ and \(K>0\).  Then, for any smooth solution $u$ to \eqref{ILW'} satisfying \(\|u(0)\|_{H^s}\leq K\) there exists \(h_0 = h_0(s,K)\) so that
\begin{align}\label{a priori bdds''}
\| u(0) \|_{H^s} \bigl( 1 +  \| u(0) \|_{H^s}\bigr)^{-\frac{2|s|}{\frac32+|s|}} \lesssim \| u(t) \|_{H^s} \lesssim \| u(0) \|_{H^s} \bigl( 1 +  \| u(0) \|_{H^s}\bigr)^{\frac{2|s|}{\frac32-|s|}},
\end{align}
uniformly for all \(0<h\leq h_0\).

Moreover, if $Q\subset H^\infty(\mathcal M)$ is bounded and equicontinuous in $H^{s}(\mathcal M)$ so that \(\|u\|_{H^s}\leq K\) for all \(u\in Q\) and \(Q_h^*\) denotes the set of orbits emanating from $Q$ under the \eqref{ILW'} flow, the set \(\bigcup_{0<h\leq h_0}Q_h^*\) is equicontinuous in $H^{s}(\mathcal M)$.
\end{theorem}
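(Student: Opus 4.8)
\emph{Proof strategy.} The plan is to prove Theorem~\ref{T2} by the same Lax-pair mechanism used for Theorem~\ref{T1}, now tracking the dependence on the depth $h$ into the shallow-water regime $h\to 0$. The heart of that mechanism is a conserved quantity for \eqref{ILW'} --- a renormalized transmission coefficient (perturbation determinant) $\alpha(\vk;u)$ built from the ILW Lax operator and depending on a spectral parameter $\vk$ --- together with a two-sided comparison, schematically
\[
\|u\|_{H^s}^2 \sim \int_0^\infty \vk^{2s}\,\beta(\vk;u)\,\frac{d\vk}{\vk},
\]
where $\beta(\vk;u)$ is a positive functional built from $\alpha$ and the comparison is valid up to multiplicative errors controlled when $\|u\|_{H^s}$ is not too large. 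For $h\geq h_0$ this comparison holds uniformly in $h$; the task here is to obtain it uniformly for $h\leq h_0$. The difficulty is that the ILW dispersion symbol $\vk\coth(h\vk)$ degenerates as $h\to0$: at frequencies $|\xi|\lesssim h^{-1}$ it behaves like the third-order Korteweg--de Vries symbol (after the time rescaling built into \eqref{ILW'}), whereas for $|\xi|\gtrsim h^{-1}$ it reverts to a Benjamin--Ono-type symbol whose strength blows up. Heuristically this is why the statement should be true at all --- \eqref{ILW'} limits onto \eqref{KdV}, which is known to enjoy uniform-in-time $H^s$ bounds well below $s=-\tfrac12$ --- but making this quantitative and uniform in $h$ is the content of the proof.

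Concretely I would proceed as follows. \textbf{Step 1.} Choose the $\vk$-normalization so that $\alpha(\vk;u)$ converges to the Korteweg--de Vries perturbation determinant as $h\to0$, and record the resulting $h$-dependent quadratic approximation $\beta_2(\vk;u)\sim\int|\widehat u(\xi)|^2\,m_h(\vk,\xi)\,d\xi$, where the symbol $m_h$ is comparable to the KdV weight $(\vk^2+\xi^2)^{-1}$ for $|\xi|h\lesssim1$ and to the Benjamin--Ono weight in the complementary range; this is a direct symbol computation. \textbf{Step 2.} Estimate the higher-order terms $\alpha-\alpha_2$ in the power-series expansion of the perturbation determinant in $u$, uniformly for $h\leq h_0$, by quantities that are \emph{small} when $h$ is small relative to $\|u\|_{H^s}$; this is exactly where the hypothesis $\|u(0)\|_{H^s}\leq K$ and the threshold $h_0=h_0(s,K)$ enter, the degeneration of the low-frequency dispersion being offset by extra smallness supplied by $h$. \textbf{Step 3.} Combine Steps~1--2 to upgrade the quadratic comparison to the full two-sided estimate above, uniformly for $h\leq h_0$, and then invoke conservation of $\alpha(\vk;\cdot)$ under the \eqref{ILW'} flow; optimizing the split between a low-$\vk$ defect region (which produces a factor that is a power of $1+\|u(0)\|_{H^s}$) and the lossless region yields the exponents $\tfrac{2|s|}{3/2\pm|s|}$ in \eqref{a priori bdds''}, the value $\tfrac32$ being the scaling exponent of the $L^2$ norm under the Korteweg--de Vries scaling that governs the $h\to0$ limit. \textbf{Step 4.} For the equicontinuity statement, note that the high-$\vk$ tail $\int_N^\infty\vk^{2s}\beta(\vk;u(t))\,\tfrac{d\vk}{\vk}$ is, by conservation and the uniform comparison, controlled by the corresponding tail of the initial datum together with an $N$-independent error that is small with $h$; since $Q$ is bounded and equicontinuous in $H^s$ these tails are uniformly small over $Q$, hence uniformly small along the orbits, uniformly in $0<h\leq h_0$.

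The main obstacle is Steps~1 and~2: one must pick the $\vk$-scaling uniformly enough that $\beta(\vk;u)$ reconstructs the $H^s$ norm across the entire frequency window $|\xi|\sim h^{-1}$ where \eqref{ILW'} transitions from KdV-like to Benjamin--Ono-like behavior, while simultaneously showing that the nonlinear corrections to the conserved quantity --- whose natural bounds involve negative powers of the degenerating low-frequency dispersion --- are tamed by the smallness of $h$. Once this uniform coercivity is established, the a priori bounds \eqref{a priori bdds''} and the propagation of equicontinuity follow from the same conservation-law bookkeeping as in the proof of Theorem~\ref{T1}.
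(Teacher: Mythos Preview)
Your strategy is correct and matches the paper's approach in its essentials. Two implementation points are worth sharpening.

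First, the paper builds no separate conserved quantity for \eqref{ILW'}. It observes that if $u$ solves \eqref{ILW'} then $q=\tfrac{h}{3}u$ solves \eqref{ILW} (after a time dilation) and simply reuses $\alpha(\kappa;q)$ from Section~\ref{S:3}. Your Step~2 then dissolves: smallness of the tail of the series is nothing more than the condition $\|\sqrt{R_0}\,q\,\sqrt{R_0}\|_{\mathfrak I_2}<\tfrac16$, and the shallow-water branch of \eqref{HS bdd} already delivers this on the range $\mu_0^2 h\leq\kappa\leq\tfrac1h$ with $\mu_0^2=[1+c\,\|u(0)\|_{H^s}^2]^{1/(\frac32-|s|)}$. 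The role of $h_0(s,K)$ is only to make the ranges in \eqref{k bigger} and \eqref{shallow k bigger} compatible, so that \eqref{small init} holds on all of $[\mu_0^2 h,\infty)$; no separate estimation of ``higher-order terms offset by smallness of $h$'' is carried out.

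Second --- and this is the point you gloss over --- your schematic integral $\int\vk^{2s}\beta(\vk;u)\,\tfrac{d\vk}{\vk}$ with a single power weight does \emph{not} reconstruct $\|u\|_{H^s_\mu}^2$ uniformly as $h\to0$. The one genuinely new computation in the paper is that the \emph{piecewise} weight $w(\kappa)=(1/h)^{s+3/2}\kappa^{s+1/2}$ on $[\mu^2 h,\tfrac1h]$ and $w(\kappa)=h^{-2}\kappa^{2s}$ on $[\tfrac1h,\infty)$, continuous at the crossover, satisfies $\int w(\kappa)\,F(\xi;\kappa,h)\,d\kappa\simeq h^{-2}(\xi^2+\mu^2)^s$ uniformly. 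This is exactly your heuristic --- effective spectral scale $\sqrt{\kappa/h}$ in the KdV regime and $\kappa$ in the BO regime --- made quantitative, and it is where the exponent $\tfrac32-|s|$ actually enters. Once that is established, \eqref{a priori bdds''} and the equicontinuity follow exactly as in the proof of Theorem~\ref{T1}, via Proposition~\ref{P:cons} and Lemma~\ref{L:equi}; no additional ``$N$-independent error small with $h$'' is needed in Step~4.
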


The complete integrability of \eqref{ILW} has been known for some time.  As well as constructing an infinite family of conservation laws, the paper \cite{MR0591811} introduced a zero-curvature representation.  Concretely, \eqref{ILW} was shown to be the consistency condition for a coupled system of equations describing the spatial and temporal derivatives of a pair of functions, namely, the upper and lower boundary values of a function holomorphic in a strip.
The subsequent paper \cite{YAS} then proposed an inverse scattering approach based on this representation.

This complicated formulation is at odds with the direct approach of \cite{KVZ} that we wish to employ.  Instead, we would like a Lax pair representation of a much more classical form: an isospectral deformation of an operator (ideally self-adjoint) acting on a concrete Hilbert space.  Taking inspiration from \cite{YAS,MR0591811}, we introduce just such a traditional Lax representation:  Defining
\begin{equation}\label{LP}\begin{aligned}
L(t) &:= -i\partial_x + \tfrac1{2h}(e^{2ih\partial_x}-1) - q(t,x), \\
P(t) &:= -\tfrac1h \partial_x -i \partial_x^2  + \tfrac1h\partial_x e^{2ih\partial_x} - (iTq')(t,x) - (\partial_x q(t,x) +q(t,x)\partial_x),
\end{aligned}\end{equation}
we have that
\begin{align}
\text{$q(t)$ solves \eqref{ILW}} \iff \tfrac{d}{dt} L(t) = [P(t),\, L(t)].        \label{Lax}
\end{align}
The validity of \eqref{Lax} will be verified in Section~\ref{S:LP}.  There we will also explain why $L$ is self-adjoint, even for $q\in H^s(\mathcal M)$ with $-\frac12<s\leq 0$.  Although it is merely incidental to our main development, we will also exhibit an illusory Lax pair for \eqref{ILW} and a number of related models in Lemma~\ref{L:IllLP}.  After completing this manuscript, we discovered that the Lax pair \eqref{LP} had previously appeared in \cite{MR597631}.

The free Lax operator (that corresponds to $q\equiv 0$) is simply the Fourier multiplier with symbol
\begin{equation}\label{symbol}
a_h(\xi) = \xi+ \tfrac1{2h}(e^{-2h\xi}-1).
\end{equation}
This function is non-negative.
In particular, the free resolvent,
$$
R_0(\kappa):=\bigl(-i\partial_x + \tfrac1{2h}(e^{2ih\partial_x}-1)+ \kappa\bigr)^{-1},
$$
exists as a positive definite $L^2$-bounded operator for all positive $\kappa$.

Mirroring the treatment of KdV in \cite{KVZ}, the central object of our analysis is the logarithm of the renormalized (in the sense of \cite{Hilbert}) perturbation determinant; however, as there, it is more convenient to introduce this quantity directly as a series of traces:
\begin{align}\label{KdValpha}
\alpha(\kappa;q) := \sum_{\ell=2}^\infty \frac{1}{\ell} \tr\Bigl\{ \bigl( \sqrt{R_0(\kappa)}\, q\, \sqrt{R_0(\kappa)}\bigr)^\ell \Bigr\}.
\end{align}

The convergence of this series for $\kappa$ large follows from Lemmas~\ref{L:dumb} and~\ref{L:HS}.  We prove that \eqref{KdValpha} is conserved under the \eqref{ILW} flow in Proposition~\ref{P:cons}.  It then remains to show that this quantity encodes conservation laws and equicontinuity at regularities $-\frac12<s\leq 0$.  The central idea is that for $\kappa$ very large, the series \eqref{KdValpha} can be conflated with its first term.  This in turn connects to the distribution of $q$ across frequencies; see \eqref{I2}.  The rather inexplicit form \eqref{F defn} of the function $F(\xi;\kappa)$ appearing there introduces some technical challenges, particularly for the question of $L^2$-equicontinuity; see Lemma~\ref{L:F}.

The technology for demonstrating equicontinuity of orbits was present already in \cite{KVZ} (also see \cite{KT}); however, this was neither appreciated nor exploited until \cite{KV}.  The utility of such equicontinuity results in treating the well-posedness problem has since been repeatedly demonstrated; see, for example, \cite{BKV:KdV5,Forlano,HKVZ,HGKV:NLS,HGKNV:DNLS,KLV,KLV:CCM,KNV,KKL,KV}. 

\subsection*{Acknowledgements} B.~H.-G. was supported by NSF grant DMS-2406816, R. K. was supported by NSF grant DMS-2154022, and M.~V. by NSF grant DMS-2348018.  We are grateful to Yilun Wu and Katie Marsden for discussions about this work.  We also wish to thank the anonymous referees for their helpful comments.

\subsection*{Data availability statement} There is no data associated with this article.

\section{Notation and Preliminaries}\label{S:2}

We write $A\lesssim B$ to indicate $A\leq CB$ for an absolute constant $C>0$ whose specific value is not important.  Occasionally, we use subscripts to indicate dependence of the constant $C$ on other parameters; for instance, we write $A \lesssim_{\alpha, \beta} B$ when $A \leq CB$ with the constant $C>0$ depending on $\alpha, \beta$.  We write $A\simeq B$ when $A\lesssim B$ and $B\lesssim A$.

Our conventions for the Fourier transform are as follows:
\begin{align*}
\widehat f(\xi) = \tfrac{1}{\sqrt{2\pi}} \int_\R e^{-i\xi x} f(x)\,dx  \qtq{so} f(x) = \tfrac{1}{\sqrt{2\pi}} \int_\R e^{i\xi x} \widehat f(\xi)\,d\xi
\end{align*}
for functions on the line \(\R\) and
\begin{align*}
\widehat f(\xi) = \int_0^1 e^{- i\xi x} f(x)\,dx \qtq{so} f(x) = \sum_{\xi\in 2\pi\Z} \widehat f(\xi) e^{i\xi x}
\end{align*}
for functions on the circle $\T$.  These definitions of the Fourier transform are unitary on $L^2$ and yield the Plancherel identities
\begin{align*}
    \|f\|_{L^2(\mathbb R)}=\|\widehat f\|_{L^2(\mathbb R)} \qtq{and}   \|f\|_{L^2(\mathbb T)}=\sum_{\xi\in 2\pi \mathbb Z}|\widehat f(\xi)|^2,
\end{align*}
as well as the following convolution identity on $\R$:
\begin{align*}
    \widehat {fg}= \tfrac{1}{\sqrt{2\pi}} \widehat f \ast \widehat g.
\end{align*}

For $\kappa\geq 1$, we define the Sobolev spaces $H^s_\kappa$ via the norms
\begin{align*}
\| f\|_{H^{s}_\kappa(\R)}^2 = \int_\R (\kappa^2+\xi^2)^s |\widehat f(\xi)|^2  \,d\xi
\end{align*}
and
\begin{align*}
\| f\|_{H^{s}_\kappa(\T)}^2 = \sum_{\xi\in 2\pi\Z} (\kappa^2+\xi^2)^s |\widehat f(\xi)|^2.
\end{align*}

\begin{definition}[Equicontinuity]\label{d:equicontinuity}
Fix $\sigma\in \R$. A bounded set \(Q\subset H^\sigma(\mathcal M)\) is said to be \emph{equicontinuous} if
\[
\limsup_{\delta\to0} \ \sup_{q\in Q} \ \sup_{|y|<\delta}\|q(\cdot +y) - q(\cdot)\|_{H^\sigma} = 0.
\]
\end{definition}

By Plancherel, equicontinuity in the spatial variable is equivalent to tightness in the Fourier variable. Specifically, a bounded set $Q\subset H^\sigma(\mathcal M)$ is equicontinuous if and only if
\begin{alignat*}{2}
\lim_{ \kappa\to\infty}\sup_{q\in Q}\int_{|\xi|\geq \kappa} & |\widehat{q}(\xi)|^2 (|\xi|+1)^{2\sigma}\, d\xi =0,&\quad &\text{when $\mathcal M = \R$,} \quad\text{or}\\
\lim_{ \kappa\to\infty}\, \sup_{q\in Q}\ \sum_{|\xi|\geq \kappa} \ & |\widehat{q}(\xi)|^2 (|\xi|+1)^{2\sigma}=0,&\quad&\text{when $\mathcal M = \T$}.
\end{alignat*}
This characterization leads quickly to a description of equicontinuity in terms of the Sobolev spaces $H^s_\kappa$:

\begin{lemma}[Characterization of equicontinuity, {\cite[Lemma~2.4]{KLV}}]\label{L:equi}
Fix $-\frac12<s<0$ and let $Q$ be a bounded subset of $H^s(\mathcal M)$.  The following are equivalent:\\[1mm]
(i) The subset $Q$ is equicontinuous in $H^s$.\\[1mm]
(ii) $\|q\|_{H^s_\kappa} \to 0$ as $\kappa\to\infty$ uniformly for $q\in Q$.
\end{lemma}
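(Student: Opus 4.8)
The plan is to reduce everything to the Fourier-side description of equicontinuity recalled just above the lemma, and then to compare the weight $(\kappa^2+\xi^2)^s$ appearing in the $H^s_\kappa$-norm with the weight $(1+|\xi|)^{2s}$ appearing in the tightness condition, region by region in frequency. Throughout I would take $\kappa\ge1$ and set $\Lambda:=\sup_{q\in Q}\|q\|_{H^s}<\infty$; the case $\mathcal M=\T$ is handled verbatim, with integrals replaced by sums over $2\pi\Z$. The two elementary facts I would record first both use $s<0$: one has $(\kappa^2+\xi^2)^s\simeq_s\kappa^{2s}$ whenever $|\xi|\le\kappa$ (since then $\kappa^2\le\kappa^2+\xi^2\le2\kappa^2$), while $(\kappa^2+\xi^2)^s\simeq_s(1+|\xi|)^{2s}$ whenever $|\xi|\ge\kappa$ (since both $\kappa^2+\xi^2$ and $(1+|\xi|)^2$ are then comparable to $\xi^2$); moreover, for any $N\ge1$, $\int_{|\xi|\le N}|\widehat q|^2\,d\xi\le(1+N)^{-2s}\|q\|_{H^s}^2$, which is just the embedding $H^s\hookrightarrow L^2$ on a fixed frequency ball.

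For the implication (ii)$\Rightarrow$(i): on the set $|\xi|\ge\kappa$ the two weights are comparable, so
\[
\int_{|\xi|\ge\kappa}(1+|\xi|)^{2s}|\widehat q(\xi)|^2\,d\xi\;\lesssim_s\;\int_{|\xi|\ge\kappa}(\kappa^2+\xi^2)^s|\widehat q(\xi)|^2\,d\xi\;\le\;\|q\|_{H^s_\kappa}^2,
\]
and the right-hand side tends to $0$ as $\kappa\to\infty$, uniformly for $q\in Q$, by hypothesis. This is precisely the Fourier tightness condition, hence $Q$ is equicontinuous in $H^s$.

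For (i)$\Rightarrow$(ii): given $\eps>0$, I would first use equicontinuity (in its tightness form) to pick $N\ge1$ with $\sup_{q\in Q}\int_{|\xi|>N}(1+|\xi|)^{2s}|\widehat q|^2\,d\xi<\eps$, and then split $\|q\|_{H^s_\kappa}^2$ over the three regions $|\xi|\le N$, $N<|\xi|\le\kappa$, and $|\xi|>\kappa$ (taking $\kappa>N$). On the first region, $(\kappa^2+\xi^2)^s\le\kappa^{2s}$ together with the embedding bound gives a contribution $\lesssim\kappa^{2s}(1+N)^{-2s}\Lambda^2$, which tends to $0$ as $\kappa\to\infty$ with $N$ fixed, exactly because $s<0$. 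On the middle region, bounding $(\kappa^2+\xi^2)^s\le\kappa^{2s}$ and $(1+|\xi|)^{-2s}\lesssim\kappa^{-2s}$ collapses the contribution to $\lesssim\int_{|\xi|>N}(1+|\xi|)^{2s}|\widehat q|^2<\eps$. On the last region the weights are comparable, so the contribution is $\lesssim_s\int_{|\xi|>N}(1+|\xi|)^{2s}|\widehat q|^2<\eps$. Combining the three, $\limsup_{\kappa\to\infty}\sup_{q\in Q}\|q\|_{H^s_\kappa}^2\lesssim_s\eps$, and letting $\eps\downarrow0$ finishes the argument.

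There is no serious obstacle here: the lemma is an exercise in frequency localization whose whole content is the monotonicity and comparability of the two weights once $s<0$. The only point to treat with any care is the order of quantifiers in (i)$\Rightarrow$(ii)—one must freeze $N$ using tightness before sending $\kappa\to\infty$ in the low-frequency piece—and it is worth noting that the assumption $s>-\tfrac12$ plays no role in this particular lemma (it is merely the range relevant to the rest of the paper), while the boundedness of $Q$ in $H^s$ is used only to control that low-frequency piece through the embedding constant $(1+N)^{-2s}$.
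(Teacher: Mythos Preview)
Your argument is correct. The paper does not supply its own proof of this lemma; it is simply quoted from \cite[Lemma~2.4]{KLV}, so there is nothing to compare against. Your frequency-region decomposition and weight comparison is the standard approach, and your side remarks---that the hypothesis $s>-\tfrac12$ plays no role here, and that the boundedness of $Q$ in $H^s$ is needed only to control the low-frequency piece in (i)$\Rightarrow$(ii)---are both accurate.
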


The book \cite{MR2154153} provides an excellent introduction to the trace functional on Hilbert space and the associated ideals $\I_p$, which play the role of noncommutative $L^p$ spaces.  Here we just give a brief account of the facts we need.

For an operator $A$ belonging to trace class and having a continuous integral kernel $K(x,y)$, the trace may be computed via
\begin{equation}\label{trace}
\tr(A) = \int  K(x,x) \,dx.
\end{equation}

Trace class operators are precisely those that can be written as the product of two Hilbert--Schmidt operators.  An operator $A$ is Hilbert--Schmidt if and only if its integral kernel $K(x,y)$ belongs to $L^2(\mathcal M\times\mathcal M)$; indeed, its Hilbert--Schmidt norm is then given by
\begin{equation}\label{HS norm}
\| A \|_{\mathfrak I_2}^2 =  \tr(A^* A) = \iint | K(x,y)|^2 \,dx\,dy.
\end{equation}

For much of what follows, we will need only the following basic assertions.  As illustrated in \cite{KVZ}, these can be verified directly, without recourse to the general theory.

\begin{lemma}[{\cite[Lemma 1.4]{KVZ}}] \label{L:dumb}
Let $A_i$ denote Hilbert--Schmidt operators on $L^2(\mathcal M)$.  Then
\begin{align}\label{dumb norm}
\| A_i \|_{\op} &\leq \| A_i \|_{\mathfrak I_2} \\
| \tr( A_1\cdots A_\ell ) | &\leq \prod_{i=1}^\ell \| A_i \|_{\mathfrak I_2}\quad\text{for all integers $\ell\geq 2$.} \label{dumb holder}
\end{align}
\end{lemma}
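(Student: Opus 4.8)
The plan is to argue entirely at the level of integral kernels, in the spirit of \cite{KVZ}, using only the Cauchy--Schwarz inequality together with the two identities \eqref{trace} and \eqref{HS norm}; no machinery from the general theory of the trace ideals $\mathfrak I_p$ is needed. Write $K_i\in L^2(\mathcal M\times\mathcal M)$ for the integral kernel of $A_i$, so that $\|A_i\|_{\mathfrak I_2}^2=\iint|K_i(x,y)|^2\,dx\,dy$ by \eqref{HS norm}. For \eqref{dumb norm}, given $f\in L^2(\mathcal M)$ we have $(A_if)(x)=\int K_i(x,y)f(y)\,dy$; Cauchy--Schwarz in $y$ gives $|(A_if)(x)|^2\le\bigl(\int|K_i(x,y)|^2\,dy\bigr)\|f\|_{L^2}^2$, and integrating in $x$ yields $\|A_if\|_{L^2}^2\le\|A_i\|_{\mathfrak I_2}^2\|f\|_{L^2}^2$, as claimed.

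Next I would establish the case $\ell=2$ of \eqref{dumb holder}. Since $A_1$ and $A_2$ are Hilbert--Schmidt, their product $A_1A_2$ is trace class, with kernel $K(x,z)=\int K_1(x,y)K_2(y,z)\,dy$; by Cauchy--Schwarz in $y$ this is well defined for a.e.\ $(x,z)$, lies in $L^2(\mathcal M\times\mathcal M)$, and obeys $\int|K(x,x)|\,dx\le\|A_1\|_{\mathfrak I_2}\|A_2\|_{\mathfrak I_2}<\infty$. Granting that \eqref{trace} may be applied to $A_1A_2$, we get
\begin{align*}
\tr(A_1A_2)=\int K(x,x)\,dx=\iint K_1(x,y)\,K_2(y,x)\,dx\,dy,
\end{align*}
and one more application of Cauchy--Schwarz on $L^2(\mathcal M\times\mathcal M)$ gives $|\tr(A_1A_2)|\le\|A_1\|_{\mathfrak I_2}\|A_2\|_{\mathfrak I_2}$. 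Equivalently, the double integral above is the Hilbert--Schmidt inner product of $A_1^*$ and $A_2$ (whose kernels are $\overline{K_1(y,x)}$ and $K_2(x,y)$), and the estimate is its Cauchy--Schwarz inequality together with $\|A_1^*\|_{\mathfrak I_2}=\|A_1\|_{\mathfrak I_2}$.

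For $\ell\ge3$ I would bootstrap from the case $\ell=2$. The key auxiliary estimate is $\|AB\|_{\mathfrak I_2}\le\|A\|_{\op}\|B\|_{\mathfrak I_2}$ for $A$ bounded and $B$ Hilbert--Schmidt: writing $b_z:=K_B(\cdot,z)\in L^2(\mathcal M)$ for the columns of $B$, so that $\int\|b_z\|_{L^2}^2\,dz=\|B\|_{\mathfrak I_2}^2$ and the kernel of $AB$ at $(x,z)$ equals $(Ab_z)(x)$, one computes $\|AB\|_{\mathfrak I_2}^2=\int\|Ab_z\|_{L^2}^2\,dz\le\|A\|_{\op}^2\|B\|_{\mathfrak I_2}^2$. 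Iterating this, peeling factors off the left and invoking \eqref{dumb norm} at each step, gives
\begin{align*}
\|A_1\cdots A_j\|_{\mathfrak I_2}\le\|A_1\|_{\op}\cdots\|A_{j-1}\|_{\op}\,\|A_j\|_{\mathfrak I_2}\le\prod_{i=1}^{j}\|A_i\|_{\mathfrak I_2}
\end{align*}
for every $1\le j\le\ell$; in particular $C:=A_2\cdots A_\ell$ is Hilbert--Schmidt with $\|C\|_{\mathfrak I_2}\le\prod_{i=2}^\ell\|A_i\|_{\mathfrak I_2}$. Applying the case $\ell=2$ to the pair $A_1$ and $C$ then yields $|\tr(A_1\cdots A_\ell)|=|\tr(A_1C)|\le\|A_1\|_{\mathfrak I_2}\|C\|_{\mathfrak I_2}\le\prod_{i=1}^{\ell}\|A_i\|_{\mathfrak I_2}$, which is \eqref{dumb holder}.

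The one genuinely delicate point is the step flagged above: the composed kernel of a product of two Hilbert--Schmidt operators need not be continuous, so \eqref{trace} does not apply verbatim. I would remove this obstruction by approximating each $K_i$ in $L^2(\mathcal M\times\mathcal M)$ by continuous, compactly supported kernels $K_i^{(n)}$ — for which the composed kernel is continuous and \eqref{trace} is classical — and then passing to the limit, using that $A_i^{(n)}\to A_i$ in $\mathfrak I_2$, the continuity of the bilinear map $(f,g)\mapsto\iint f(x,y)g(y,x)\,dx\,dy$ on $L^2(\mathcal M\times\mathcal M)$, and the fact that the trace is continuous on the trace class. Everything else is Cauchy--Schwarz, and this is the only real obstacle.
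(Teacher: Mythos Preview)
The paper does not supply its own proof of this lemma; it simply cites \cite[Lemma~1.4]{KVZ} and remarks that ``these can be verified directly, without recourse to the general theory.'' Your argument is precisely such a direct verification: Cauchy--Schwarz on the kernels for \eqref{dumb norm} and for the $\ell=2$ case of \eqref{dumb holder}, then the submultiplicativity $\|AB\|_{\mathfrak I_2}\le\|A\|_{\op}\|B\|_{\mathfrak I_2}$ to reduce $\ell\ge3$ to $\ell=2$. This is correct and is exactly the elementary route the paper alludes to.

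One small inconsistency worth tightening: you announce that no trace-ideal machinery is needed, but your approximation step invokes ``the trace is continuous on the trace class,'' which in turn needs $\|AB\|_{\mathfrak I_1}\le\|A\|_{\mathfrak I_2}\|B\|_{\mathfrak I_2}$ to get $A_1^{(n)}A_2^{(n)}\to A_1A_2$ in $\mathfrak I_1$. You can sidestep this entirely by taking the double-integral formula $\tr(A_1A_2)=\iint K_1(x,y)K_2(y,x)\,dx\,dy$ as the \emph{definition} of the trace of a product of two Hilbert--Schmidt operators (this is the viewpoint of \cite{KVZ}); then the Cauchy--Schwarz bound is immediate and no approximation or continuity-of-trace argument is required at all.
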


In order to verify the convergence of \eqref{KdValpha} using \eqref{dumb holder}, we need a suitable Hilbert--Schmidt estimate.  This is the topic of our next lemma.

\begin{lemma}\label{L:HS}
For $h,\kappa>0$, we have
\begin{align}\label{I2}
 \bigl\| \sqrt{R_0(\kappa)}\, q\, \sqrt{R_0(\kappa)} \bigr\|_{\mathfrak I_2(\mathcal M)}^2
 = \begin{cases}\displaystyle\int_{\R}  F(\xi;\kappa,h)\, |\widehat q(\xi)|^2\,d\xi &\quad\text{if \(\mathcal M = \R\),}\vspace{1em}\\ \displaystyle\sum_{\xi\in2\pi\Z} F(\xi;\kappa,h)\, |\widehat q(\xi)|^2&\quad\text{if \(\mathcal M = \T\),}\end{cases}
\end{align}
where 
\begin{align}\label{F defn}
 F(\xi;\kappa,h) &:= \begin{cases}
 	\displaystyle\ \int_{\R} \bigl[a_h(\eta) +\kappa\bigr]^{-1}\bigl[a_h(\xi+\eta) + \kappa\bigr]^{-1} \,\tfrac{d\eta}{2\pi}  & \quad\text{on $\R$,}\\[2ex]
	\displaystyle\sum_{\eta\in2\pi\Z} \bigl[a_h(\eta) +\kappa\bigr]^{-1}\bigl[a_h(\xi+\eta) + \kappa\bigr]^{-1} & \quad\text{on $\T$,}
\end{cases}
\end{align}
with \(a_h(\xi)\) defined as in \eqref{symbol}.

The function \(F(\xi;\kappa,h)\) is smooth, non-negative, even in \(\xi\), and satisfies the estimate
\begin{equation}\label{F main bound}
F(\xi;\kappa,h)\simeq \bigl[\tfrac{h\xi^2}{1+h|\xi|} +\kappa\bigr]^{-1}\Bigl[\sqrt{\tfrac{1+h\kappa}{h\kappa}} + \log\bigl(1 + \tfrac{h|\xi|}{1+h\kappa}\bigr)\Bigr].
\end{equation} 

In particular, if $-\frac12<s\leq 0$ then
\begin{align}\label{HS bdd}
 \| \sqrt{R_0(\kappa)}\, q\, \sqrt{R_0(\kappa)}\|_{\mathfrak I_2(\mathcal M)}^2 \lesssim_s\begin{cases} \kappa^{-(1-2|s|)} \|q\|_{H^s_\kappa(\mathcal M)}^2&\quad\text{if \(\kappa>\tfrac1h\),}\vspace{1em}\\\bigl(\tfrac \kappa h\bigr)^{-(\frac32-|s|)}\bigl\|\tfrac qh\bigr\|_{H_{\sqrt{\kappa/h}}^s(\mathcal M)}^2&\quad\text{if \(0<\kappa\leq\tfrac1h\).}\end{cases}
\end{align}

Consequently, there exists a constant \(C_s>0\) so that if \(h,\delta>0\) and
\begin{equation}\label{k bigger}
\kappa \geq \max\Bigl\{\tfrac1h,\bigl[ 1 + \tfrac1{\delta^2} C_s \| q\|_{H^s}^2 \bigr]^{\frac{1}{1-2|s|}}\Bigr\}
\end{equation}
or
\begin{equation}\label{shallow k bigger}
\bigl[ 1 + \tfrac1{\delta^2} C_s\bigl\|\tfrac qh\bigr\|_{H^s}^2 \bigr]^{\frac{1}{\frac32-|s|}}\leq \tfrac\kappa h\leq \tfrac1{h^2}
\end{equation}
then
\begin{align}\label{small norm}
 \| \sqrt{R_0(\kappa)}\, q\, \sqrt{R_0(\kappa)}\|_{\mathfrak I_2(\mathcal M)}^2 <\delta^2.
\end{align}
\end{lemma}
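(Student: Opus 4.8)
The plan is to reduce everything to the scalar identity \eqref{I2} and then to a careful analysis of the single function $F$. Conjugating $\sqrt{R_0(\kappa)}\,q\,\sqrt{R_0(\kappa)}$ by the Fourier transform, and recalling that $R_0(\kappa)$ is the multiplier with symbol $[a_h(\xi)+\kappa]^{-1}$ while multiplication by $q$ becomes convolution against $\widehat q$, one sees that this operator has integral kernel $[a_h(\xi)+\kappa]^{-1/2}\,\widehat q(\xi-\eta)\,[a_h(\eta)+\kappa]^{-1/2}$ (with an extra factor $(2\pi)^{-1/2}$ when $\mathcal M=\R$). Using \eqref{HS norm} and Plancherel, the Hilbert--Schmidt norm squared is the $L^2$ (resp. $\ell^2$) norm squared of this kernel; performing the $\eta$-integration (resp. summation) after the change of variables $\zeta=\xi-\eta$ and Fubini produces exactly the right-hand side of \eqref{I2} with $F$ as in \eqref{F defn}. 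Non-negativity of $F$ is manifest; $F$ is even because after the substitution $\eta\mapsto\eta-\tfrac\xi2$ the integrand is symmetric under $\xi\mapsto-\xi$; and $F$ is smooth in $\xi$ by differentiating under the integral (resp. sum), which is legitimate since $a_h$ is smooth, $a_h+\kappa\geq\kappa>0$, and the derivatives of $[a_h(\cdot)+\kappa]^{-1}$ decay fast enough in $\eta$.

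Next I would record the elementary facts about $a_h$: it is non-negative, convex, vanishes only at the origin, has $0\leq a_h'\leq1$ on $[0,\infty)$, satisfies $a_h(\eta)\simeq\tfrac{h\eta^2}{1+h|\eta|}$ for $\eta\geq-C/h$, and grows like $h^{-1}e^{2h|\eta|}$ as $\eta\to-\infty$; in particular $a_h(\eta)\gtrsim\tfrac{h\eta^2}{1+h|\eta|}$ for all $\eta$. I then introduce the even, convex comparison symbol $b(\eta):=\tfrac{h\eta^2}{1+h|\eta|}+\kappa$; since $a_h(\eta)+\kappa\gtrsim b(\eta)$ globally, we get the upper bound $F(\xi;\kappa,h)\lesssim G(\xi):=\int_{\R}b(\eta)^{-1}b(\xi+\eta)^{-1}\,\tfrac{d\eta}{2\pi}$ (and similarly with sums on $\T$) for free. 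To bound $G(\xi)\lesssim$ RHS of \eqref{F main bound}, take $\xi\geq0$ (both sides even) and set $B:=a_h(\xi)+\kappa\simeq b(\xi)$; split the $\eta$-integral at $|\eta|\simeq\xi$. On the far part $|\eta|\gtrsim\xi$ the integrand is $\lesssim b(\eta)^{-2}$, and a direct computation bounds $\int_{\simeq\xi}^\infty b(\nu)^{-2}\,d\nu$ by the right-hand side; on the near part $|\eta|\lesssim\xi$, one of the two factors is $\simeq B$, and after a substitution exploiting that $b$ is even and increasing on $[0,\infty)$ one reduces matters to $\simeq B^{-1}\int_0^{\simeq\xi}b(\nu)^{-1}\,d\nu$. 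Splitting these one-dimensional integrals at $\nu\simeq1/h$ and distinguishing $h\kappa\leq1$ from $h\kappa>1$, elementary calculus gives $\int_0^R b(\nu)^{-1}\,d\nu\lesssim\sqrt{\tfrac{1+h\kappa}{h\kappa}}+\log\bigl(1+\tfrac{hR}{1+h\kappa}\bigr)$ and $\int_0^\infty b(\nu)^{-2}\,d\nu\simeq\kappa^{-1}\sqrt{\tfrac{1+h\kappa}{h\kappa}}$, which assemble into the claimed upper bound.

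For the matching lower bound $F(\xi;\kappa,h)\gtrsim$ RHS (again $\xi\geq0$), I would exhibit explicit sub-regions of the $\eta$-integral on which both $a_h(\eta)+\kappa$ and $a_h(\xi+\eta)+\kappa$ have controlled size. Restricting to $\eta\in[0,\kappa]$, where $a_h(\eta)\leq\eta\leq\kappa$ and $a_h(\xi+\eta)\leq a_h(\xi)+\eta\leq a_h(\xi)+\kappa$, already yields $F\gtrsim B^{-1}$, which is the $B^{-1}\sqrt{(1+h\kappa)/(h\kappa)}$ term when $h\kappa\geq1$; when $h\kappa\leq1$ one instead restricts near $\eta=0$ to the overlap of $\{a_h(\eta)\lesssim\kappa\}$ and $\{a_h(\xi+\eta)\lesssim\kappa\}$ (of length $\gtrsim\sqrt{\kappa/h}$ when $|\xi|\lesssim\sqrt{\kappa/h}$, and otherwise to an interval $[0,\epsilon\sqrt{\kappa/h}]$ on which $a_h(\xi+\eta)+\kappa\simeq B$) to gain the factor $\sqrt{(1+h\kappa)/(h\kappa)}$; and restricting to $\eta\in[\max(1/h,\kappa),\xi]$, where $a_h(\eta)+\kappa\simeq\eta$ and $a_h(\xi+\eta)+\kappa\simeq B$, recovers the logarithmic term. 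For $\mathcal M=\T$ one proceeds mutatis mutandis: the upper bound follows by dominating each sum by the corresponding integral plus the single term at $\eta=0$ (the summand has bounded variation away from its peak there), and the lower bound by retaining an explicit family of lattice points in the same ranges.

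Granting \eqref{F main bound} and \eqref{I2}, estimate \eqref{HS bdd} reduces to the pointwise bounds $F(\xi;\kappa,h)\lesssim_s\kappa^{-(1-2|s|)}(\kappa^2+\xi^2)^{-|s|}$ when $\kappa>1/h$ and $F(\xi;\kappa,h)\lesssim_s(\kappa/h)^{-(3/2-|s|)}h^{-2}(\kappa/h+\xi^2)^{-|s|}$ when $0<\kappa\leq1/h$; in the first regime $\sqrt{(1+h\kappa)/(h\kappa)}\simeq1$ and the logarithm is absorbed using the elementary inequality $\log(1+y)\lesssim_s y^{1-2|s|}$ for $y\geq0$ (valid precisely because $s>-\tfrac12$, and reducing to $\log(1+y)\leq y$ when $s=0$), and the second regime is entirely parallel after rescaling. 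Finally \eqref{small norm} follows from \eqref{HS bdd}: since \eqref{k bigger} forces $\kappa\geq1$ and \eqref{shallow k bigger} forces $\kappa/h\geq1$, we may bound $\|q\|_{H^s_\kappa}\lesssim\|q\|_{H^s}$ (resp. $\|q/h\|_{H^s_{\sqrt{\kappa/h}}}\lesssim\|q/h\|_{H^s}$), after which the lower bounds on $\kappa$ imposed in \eqref{k bigger} and \eqref{shallow k bigger} are exactly what makes the right-hand side of \eqref{HS bdd} strictly less than $\delta^2$, once $C_s$ is taken at least as large as the implicit constant there. The one genuinely delicate step is the two-sided bound \eqref{F main bound}: $a_h$ is neither even nor homogeneous, it changes from quadratic to linear growth at $|\eta|\simeq1/h$, and it blows up exponentially as $\eta\to-\infty$, so the upper bound must be organized so as to capture the entire integral—in particular the contribution near $\eta=-\xi$—while the lower bound requires choosing the right test region in each of the regimes $h\kappa\lessgtr1$ and $h|\xi|\lessgtr1$; comparison with the even symbol $b$ is what keeps the bookkeeping tractable.
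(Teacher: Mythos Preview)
Your proposal is correct and follows essentially the same route as the paper: reduce to \eqref{I2} via the Fourier kernel, compare $a_h$ with the even symbol $b(\eta)=\tfrac{h\eta^2}{1+h|\eta|}+\kappa$ to obtain the upper bound, split the resulting convolution integral into a near part (one factor $\simeq B$) and a far part (both factors comparable), and then run a case analysis on $h\kappa\lessgtr 1$ and the size of $|\xi|$; for the lower bound both you and the paper restrict to explicit $\eta$-intervals on which both factors are controlled. The only differences are cosmetic---the paper uses the substitution $\eta\mapsto\eta-\xi$ for evenness and the symmetry about $\eta=-\tfrac\xi2$ to organize the split, and records the single auxiliary estimate $\int_0^{\frac12\max\{\xi,\kappa,\sqrt{\kappa/h}\}}b(\nu)^{-1}\,d\nu\simeq\sqrt{\tfrac{1+h\kappa}{h\kappa}}+\log\bigl(1+\tfrac{h\xi}{1+h\kappa}\bigr)$ to handle the lower bound in one stroke rather than treating the regimes separately---but the substance is the same.
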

\begin{proof}
The identities \eqref{I2} and \eqref{F defn} follow from considering the Fourier kernel of the operator \(\sqrt{R_0(\kappa)}\, q\, \sqrt{R_0(\kappa)}\).

Making the change of variables \(\eta\mapsto \eta-\xi\) in \eqref{F defn} proves that the map \(\xi\mapsto F(\xi;\kappa,h)\) is even.  That $F$ is also smooth and non-negative follows from the fact that the function $a_h$ is smooth and non-negative.

The estimate \eqref{HS bdd} follows readily from \eqref{F main bound}, while \eqref{small norm} follows from \eqref{HS bdd} under the assumptions \eqref{k bigger} and \eqref{shallow k bigger}.  Thus, it remains to estimate the size of $F(\xi;\kappa,h)$.  We present the details on the real line; the periodic setting can be treated analogously, with integrals being replaced by sums.  Further, as \(F\) is even in \(\xi\) and $|\widehat q(\xi)| = |\widehat q(-\xi)|$ (since $q$ is real-valued), it suffices to consider the case that \(\xi\geq0\). 

Denoting \(a(\xi) = a_1(\xi) = \xi + \tfrac12(e^{-2\xi}-1)\), we compute that \(a'(\xi) = 1 - e^{-2\xi}\) and \(a''(\xi) = 2e^{-2\xi}>0\), so \(a(\xi)\) is convex with a global minimum at \(\xi = 0\). By considering the asymptotic behavior of \(a(\xi)\) as \(\xi\to0\) and \(\xi\to \pm\infty\), we see that
\begin{equation}\label{a bounds}
a(\xi)\gtrsim \tfrac{\xi^2}{1+|\xi|}\quad\text{for all \(\xi\in \R\)}\quad\text{and}\quad a(\xi)\simeq \tfrac{\xi^2}{1+\xi}\quad\text{for all \(\xi\geq0\).}
\end{equation}

As \(a_h(\xi) = \frac1ha(h\xi)\), the estimate \eqref{a bounds} yields the (uniform in \(h\)) upper bound
\begin{align}
F(\xi;\kappa,h) &\lesssim \int_\R \bigl[\tfrac{h\eta^2}{1+h|\eta|} +\kappa\bigr]^{-1}\bigl[\tfrac{h(\xi+\eta)^2}{1+h|\xi+\eta|} + \kappa\bigr]^{-1} \,d\eta\notag\\
&\lesssim \int_{-\frac12\xi}^{\frac12\xi} \bigl[\tfrac{h\eta^2}{1+h|\eta|} +\kappa\bigr]^{-1}\bigl[\tfrac{h(\xi+\eta)^2}{1+h|\xi+\eta|} + \kappa\bigr]^{-1} \,d\eta\label{F upper bound}\\
&\quad  + \int_{\frac12\xi}^\infty \bigl[\tfrac{h\eta^2}{1+h|\eta|} +\kappa\bigr]^{-1}\bigl[\tfrac{h(\xi+\eta)^2}{1+h|\xi+\eta|} + \kappa\bigr]^{-1} \,d\eta,\notag
\end{align}
where the second inequality follows from the fact that the map
\[
\eta\mapsto \bigl[\tfrac{h\eta^2}{1+h|\eta|} +\kappa\bigr]^{-1}\bigl[\tfrac{h(\xi+\eta)^2}{1+h|\xi+\eta|} + \kappa\bigr]^{-1}
\]
is symmetric about \(\eta = -\frac12\xi\).

For the first summand on RHS\eqref{F upper bound}, we estimate
\begin{align*}
\int_{-\frac12\xi}^{\frac12\xi}\bigl[\tfrac{h\eta^2}{1+h|\eta|} +\kappa\bigr]^{-1}\bigl[\tfrac{h(\xi+\eta)^2}{1+h|\xi+\eta|} + \kappa\bigr]^{-1} \,d\eta \simeq \bigl[\tfrac{h\xi^2}{1+h|\xi|} +\kappa\bigr]^{-1}\int_0^{\frac12\xi}\bigl[\tfrac{h\eta^2}{1+h|\eta|} +\kappa\bigr]^{-1}\,d\eta.
\end{align*}
We then consider several cases depending on the relative size of \(\xi,\kappa,h\).

First, suppose that \(\kappa>\frac1h\). If additionally \(0\leq\xi\leq \kappa\), then
\begin{align*}
\int_0^{\frac12\xi}\bigl[\tfrac{h\eta^2}{1+h|\eta|} +\kappa\bigr]^{-1}\,d\eta&\simeq \int_0^{\frac12\xi}\tfrac1\kappa\,d\eta\simeq \tfrac\xi\kappa\lesssim 1,
\end{align*}
whereas if \(\xi>\kappa\) we have
\begin{align*}
\int_0^{\frac12\xi}\bigl[\tfrac{h\eta^2}{1+h|\eta|} +\kappa\bigr]^{-1}\,d\eta\simeq \int_0^{\frac12\kappa}\tfrac1\kappa\,d\eta + \int_{\frac12\kappa}^{\frac12\xi}\tfrac1\eta\,d\eta\simeq 1 + \log\bigl(\tfrac\xi\kappa\bigr).
\end{align*}

Second, suppose that \(0<\kappa\leq \frac1h\). If \(0\leq\xi\leq \sqrt{\frac\kappa h}\) then
\[
\int_0^{\frac12\xi}\bigl[\tfrac{h\eta^2}{1+h|\eta|} +\kappa\bigr]^{-1}\,d\eta \simeq \int_0^{\frac12\xi}\tfrac1\kappa\,d\eta \simeq \tfrac\xi\kappa\lesssim \tfrac1{\sqrt{h\kappa}},
\]
if \(\sqrt{\frac\kappa h}<\xi\leq\frac1h\) then
\begin{align*}
\int_0^{\frac12\xi}\bigl[\tfrac{h\eta^2}{1+h|\eta|} +\kappa\bigr]^{-1}\,d\eta&\simeq \int_0^{\frac12\sqrt{\frac\kappa h}}\tfrac1\kappa\,d\eta + \int_{\frac12\sqrt{\frac\kappa h}}^{\frac12\xi}\tfrac1{h\eta^2}\,d\eta\simeq \tfrac1{\sqrt{h\kappa}},
\end{align*}
and if \(\xi>\frac1h\) then
\begin{align*}
\int_0^{\frac12\xi}\bigl[\tfrac{h\eta^2}{1+h|\eta|} +\kappa\bigr]^{-1}\,d\eta&\simeq \int_0^{\frac12\sqrt{\frac\kappa h}}\tfrac1\kappa\,d\eta + \int_{\frac12\sqrt{\frac\kappa h}}^{\frac1{2h}}\tfrac1{h\eta^2}\,d\eta + \int_{\frac1{2h}}^{\frac12\xi}\tfrac1\eta\,d\eta\simeq \tfrac1{\sqrt{h\kappa}} + \log(h\xi).
\end{align*}

Putting these cases together, we arrive at the estimate
\[
\int_0^{\frac12\xi} \bigl[\tfrac{h\eta^2}{1+h|\eta|} +\kappa\bigr]^{-1} \,d\eta\lesssim \sqrt{\tfrac{1+h\kappa}{h\kappa}} + \log\bigl(1 + \tfrac{h\xi}{1+h\kappa}\bigr),
\]
which yields an acceptable upper bound for the first summand on RHS\eqref{F upper bound}. We also note that these estimates prove that
\begin{equation}\label{lower bound helper}
\int_0^{\frac12\max\{\xi,\kappa,\sqrt{\frac\kappa h}\}} \bigl[\tfrac{h\eta^2}{1+h|\eta|} +\kappa\bigr]^{-1} \,d\eta\simeq \sqrt{\tfrac{1+h\kappa }{h\kappa}} + \log\bigl(1 + \tfrac{h\xi}{1+h\kappa}\bigr),
\end{equation}
which we will use in our proof of the lower bound below.

Turning to the second summand on RHS\eqref{F upper bound}, we estimate
\[
\int_{\frac12\xi}^\infty \bigl[\tfrac{h\eta^2}{1+h|\eta|} +\kappa\bigr]^{-1}\bigl[\tfrac{h(\xi+\eta)^2}{1+h|\xi+\eta|} + \kappa\bigr]^{-1} \,d\eta\simeq \int_{\frac12\xi}^\infty \bigl[\tfrac{h\eta^2}{1+h|\eta|} +\kappa\bigr]^{-2}\,d\eta.
\]
We proceed as for the first summand: When \(\kappa>\frac1h\), we consider the cases \(\xi>\kappa\) and \(0\leq\xi\leq \kappa\) to obtain the estimate
\[
\int_{\frac12\xi}^\infty \bigl[\tfrac{h\eta^2}{1+h|\eta|} +\kappa\bigr]^{-2}\,d\eta\lesssim \bigl[\tfrac{h\xi^2}{1+h\xi} +\kappa\bigr]^{-1}\Bigl[\sqrt{\tfrac{1+h\kappa}{h\kappa}} + \log\bigl(1 + \tfrac{h\xi}{1+h\kappa}\bigr)\Bigr].
\]
Similarly, when \(0<\kappa\leq \frac1h\) we consider the cases \(\xi>\frac1h\), \(\sqrt{\frac\kappa h}<\xi\leq \frac1h\), and \(0\leq\xi\leq \sqrt{\frac\kappa h}\) to bound
\[
\int_{\frac12\xi}^\infty \bigl[\tfrac{h\eta^2}{1+h|\eta|} +\kappa\bigr]^{-2}\,d\eta\lesssim \bigl[\tfrac{h\xi^2}{1+h\xi} +\kappa\bigr]^{-1}\Bigl[\sqrt{\tfrac{1+h\kappa }{h\kappa}} + \log\bigl(1 + \tfrac{h\xi}{1+h\kappa}\bigr)\Bigr].
\]

Combining our estimates for the two summands on RHS\eqref{F upper bound} and recalling that \(F\) is even in \(\xi\) gives us the upper bound in \eqref{F main bound}.

Turning to the lower bound in \eqref{F main bound}, we take \(\xi\geq 0\) and combine \eqref{F defn} and \eqref{a bounds} to obtain
\begin{align*}
F(\xi;\kappa,h) &\gtrsim \int_0^{\frac12\max\{\xi,\kappa,\sqrt{\frac\kappa h}\}} \bigl[\tfrac{h\eta^2}{1+h|\eta|} +\kappa\bigr]^{-1}\bigl[\tfrac{h(\xi+\eta)^2}{1+h(\xi+\eta)} + \kappa\bigr]^{-1} \,d\eta\\
&\gtrsim \bigl[\tfrac{h\xi^2}{1+h\xi} + \kappa\bigr]^{-1}\int_0^{\frac12\max\{\xi,\kappa,\sqrt{\frac\kappa h}\}} \bigl[\tfrac{h\eta^2}{1+h|\eta|} +\kappa\bigr]^{-1} \,d\eta.
\end{align*}
Applying \eqref{lower bound helper} and again using that \(F\) is even in \(\xi\) completes the proof of \eqref{F main bound}. 
\end{proof}

Lemma~\ref{L:HS} guarantees convergence of the series defining $\alpha(\kappa;q(t))$ whenever $\kappa>0$ is sufficiently large depending on \(\|q(t)\|
_{H^s},h,s\).  To ensure that \(\kappa\) can be chosen uniformly in time, we will employ a continuity argument, the basis of which is formalized in the next lemma: 

\begin{lemma}[{\cite[Lemma 1.5]{KVZ}}] \label{L:C1}
Let $t\mapsto A(t)$ define a $C^1$ curve in $\mathfrak{I}_2$.  Suppose
\begin{align*}
\bigl\| A(t_0) \bigr\|_{\mathfrak I_2} < \tfrac13.
\end{align*}
Then there is a closed neighborhood $I$ of $t_0$ on which the series
\begin{align*}
\alpha(t) := \sum_{\ell=2}^\infty \tfrac{1}{\ell} \tr\bigl\{ A(t)^\ell \bigr\}
\end{align*}
converges and defines a $C^1$ function with
 \begin{align*}
\tfrac{d\ }{dt} \alpha(t) := \sum_{\ell=2}^\infty  \tr\bigl\{ A(t)^{\ell-1} \tfrac{d\ }{dt} A(t) \bigr\}.
\end{align*}
Moreover, if $A(t)$ is self-adjoint, then
\begin{align*}
\tfrac13 \| A(t) \|^2_{\mathfrak I_2} \leq \alpha(t) \leq \tfrac23 \| A(t) \|^2_{\mathfrak I_2} \quad\text{for all $t\in I$.}
\end{align*}
\end{lemma}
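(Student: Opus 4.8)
The plan is to run the continuity-plus-series argument of \cite{KVZ}. First I would use continuity of $t\mapsto A(t)$ into $\mathfrak I_2$ to fix some $\rho\in\bigl(\|A(t_0)\|_{\mathfrak I_2},\tfrac13\bigr)$ and a closed neighborhood $I$ of $t_0$ on which $\|A(t)\|_{\mathfrak I_2}\leq\rho$. On $I$, estimate \eqref{dumb holder} gives $|\tr\{A(t)^\ell\}|\leq\|A(t)\|_{\mathfrak I_2}^\ell\leq\rho^\ell$ for all $\ell\geq2$, so the series defining $\alpha(t)$ is dominated by $\sum_{\ell\geq2}\tfrac1\ell\rho^\ell<\infty$; hence it converges absolutely and uniformly on $I$ and, each summand being continuous, defines a continuous function there.

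Next I would prove the $C^1$ claim by first differentiating the individual terms $g_\ell(t):=\tfrac1\ell\tr\{A(t)^\ell\}$. Beginning from the telescoping identity
\begin{align*}
A(t+\eta)^\ell-A(t)^\ell=\sum_{j=0}^{\ell-1}A(t+\eta)^j\bigl(A(t+\eta)-A(t)\bigr)A(t)^{\ell-1-j},
\end{align*}
I would divide by $\eta$, take the trace, and use cyclicity to move $\tfrac1\eta\bigl(A(t+\eta)-A(t)\bigr)$ to the right of each summand. As $\eta\to0$ we have $\tfrac1\eta(A(t+\eta)-A(t))\to A'(t)$ in $\mathfrak I_2$, while $A(t)^{\ell-1-j}A(t+\eta)^j\to A(t)^{\ell-1}$ in $\mathfrak I_1$ when $\ell\geq3$ and in $\mathfrak I_2$ when $\ell=2$; grouping the factors so that each summand is a product converging in $\mathfrak I_1$ and invoking continuity of $\tr$ on $\mathfrak I_1$ shows that each of the $\ell$ summands converges to $\tr\{A(t)^{\ell-1}A'(t)\}$, hence $g_\ell'(t)=\tr\{A(t)^{\ell-1}A'(t)\}$; the same estimates show $t\mapsto g_\ell'(t)$ is continuous. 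By \eqref{dumb holder} once more, $|g_\ell'(t)|\leq\|A(t)\|_{\mathfrak I_2}^{\ell-1}\|A'(t)\|_{\mathfrak I_2}\leq\rho^{\ell-1}\sup_{I}\|A'\|_{\mathfrak I_2}$, which is summable over $\ell\geq2$ because $I$ is compact and $A$ is $C^1$; thus $\sum_{\ell\geq2}g_\ell'$ converges uniformly on $I$, and the classical term-by-term differentiation theorem gives $\alpha\in C^1(I)$ with $\alpha'(t)=\sum_{\ell\geq2}\tr\{A(t)^{\ell-1}A'(t)\}$.

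For the self-adjoint bounds I would diagonalize: a self-adjoint Hilbert--Schmidt operator $A(t)$ is compact with real eigenvalues $(\lambda_j)_j$ satisfying $\sum_j\lambda_j^2=\|A(t)\|_{\mathfrak I_2}^2$, $|\lambda_j|\leq\|A(t)\|_{\op}\leq\|A(t)\|_{\mathfrak I_2}\leq\rho<\tfrac13$ by \eqref{dumb norm}, and $\tr\{A(t)^\ell\}=\sum_j\lambda_j^\ell$. Since $\sum_j\sum_{\ell\geq2}\tfrac1\ell|\lambda_j|^\ell\lesssim\sum_j\lambda_j^2<\infty$, Tonelli lets me interchange the sums to get $\alpha(t)=\sum_j\bigl[-\lambda_j-\log(1-\lambda_j)\bigr]$. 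It then remains to establish the scalar inequality
\begin{align*}
\tfrac13x^2\leq-x-\log(1-x)=\sum_{\ell\geq2}\tfrac{x^\ell}{\ell}\leq\tfrac23x^2\qquad\text{for }|x|\leq\tfrac13,
\end{align*}
which I would obtain from $-x-\log(1-x)=x^2\bigl(\tfrac12+\sum_{\ell\geq3}\tfrac{x^{\ell-2}}{\ell}\bigr)$ together with the tail bound $\bigl|\sum_{\ell\geq3}\tfrac{x^{\ell-2}}{\ell}\bigr|\leq\tfrac13\cdot\tfrac{|x|}{1-|x|}\leq\tfrac16$. Applying this with $x=\lambda_j$ and summing over $j$ then produces $\tfrac13\|A(t)\|_{\mathfrak I_2}^2\leq\alpha(t)\leq\tfrac23\|A(t)\|_{\mathfrak I_2}^2$ for every $t\in I$.

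The hard part will be the differentiation step in the second paragraph: rigorously verifying $\tfrac{d}{dt}\tr\{A(t)^\ell\}=\ell\,\tr\{A(t)^{\ell-1}A'(t)\}$ and the continuity of these terms. The care lies in tracking which products of the $C^1$ curve land in $\mathfrak I_1$ rather than only in $\mathfrak I_2$ and in exploiting the continuity of the trace functional on $\mathfrak I_1$; by comparison, the convergence of both series is a routine geometric domination and the two-sided bound reduces to the elementary scalar estimate displayed above.
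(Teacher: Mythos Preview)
Your proposal is correct. The paper does not give its own proof of this lemma but cites it verbatim from \cite[Lemma~1.5]{KVZ}; your argument---continuity to pin down a closed neighborhood with $\|A(t)\|_{\mathfrak I_2}\leq\rho<\tfrac13$, geometric domination via \eqref{dumb holder} for convergence, telescoping plus cyclicity for the termwise derivative, and the eigenvalue reduction to the scalar bound $\tfrac13 x^2\leq -x-\log(1-x)\leq\tfrac23 x^2$ on $|x|\leq\tfrac13$---is exactly the standard route one would expect from that reference.
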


\section{Verification of the Lax pair}\label{S:LP}

Suppose $q\equiv 0$. Fourier analysis easily allows us to interpret the free Lax operator $L_0= -i\partial_x +\frac1{2h}(e^{2ih\partial_x}-1)$ as a self-adjoint operator that is bounded below.  In particular, the quadratic form domain of this operator is comprised of those $f\in L^2(\mathcal M)$ with 
\begin{equation}\label{Q(L0)}\begin{aligned}
\int_\R \bigl[\xi+ \tfrac1{2h}(e^{-2h\xi}-1) + 1\bigr] |\widehat f(\xi)|^2\,d\xi &< \infty \qtq{or}\\
\sum_{\xi\in2\pi\Z}  \bigl[\xi+ \tfrac1{2h}(e^{-2h\xi}-1) + 1\bigr] |\widehat f(\xi)|^2 &< \infty,
\end{aligned}\end{equation}
depending on the geometry.  Now by \eqref{HS bdd}, any $q\in H^s(\mathcal M)$ with $-\frac12<s\leq 0$ is an infinitesimally form bounded perturbation of $L_0$.  Consequently, by the KLMN~Theorem, the Lax operator $L= -i\partial_x +\frac1{2h}(e^{2ih\partial_x}-1)-q$ is self-adjoint, bounded from below, and has the same form domain as $L_0$; see~\cite{MR0493420} for further discussion.

The \eqref{KdV}, \eqref{BO}, and \eqref{ILW} equations are all specific examples of the more general Whitham equation, which combines the advection nonlinearity with a completely general linear dispersion:
\begin{equation}\label{BEQ}
\tfrac{d}{dt}q = - M q' - c q' - 2qq' .
\end{equation}
Here $M$ is the Fourier multiplier operator $\widehat{M f}(\xi) := m(\xi) \widehat f(\xi)$, associated to a symbol $m:\R\to\R$, and $c\in\R$ is a constant.  The background speed $c$ could be incorporated into the symbol $m$ or removed by the Galilei transformation $q\mapsto q -c/2$; however, we choose to keep it separate as in \eqref{ILW}.

As an offshoot of our calculations verifying the Lax pair formulation \eqref{Lax}, we have discovered a Lax pair representation of \eqref{BEQ} that we present in the next lemma.  We regard this Lax pair representation as illusory because the conjugacy class of this Lax operator \(\widetilde L(t)\) is completely independent of $q$; indeed,
\[
e^{-i\int_0^x q(t,y)\,dy}\widetilde L(t)e^{i\int_0^x q(t,y)\,dy} = -i\partial_x.
\]
Moreover, the renormalized perturbation determinant is always unity because the relevant integral operator is of Volterra type.

\begin{lemma}[Illusory Lax pair]\label{L:IllLP}
For a smooth trajectory $q:\R\to H^\infty(\mathcal M)$, let
\begin{equation}\label{faux LP}\begin{aligned}
\widetilde L(t) &:= -i\partial_x  - q(t), \\
\widetilde P(t) &:= -c \partial_x - i(Mq(t)) - i \partial_x^2   - \partial_x q(t) - q(t)\partial_x,
\end{aligned}\end{equation}
where $c\in \R$ and $M$ is a general Fourier multiplier.  Then
\begin{align}
\text{$q(t)$ solves \eqref{BEQ}} \iff \tfrac{d}{dt} \widetilde L(t) = [\widetilde P(t),\, \widetilde L(t)].    \label{IllLax}    
\end{align}
\end{lemma}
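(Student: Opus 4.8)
The plan is to verify the operator identity $\frac{d}{dt}\widetilde L = [\widetilde P,\widetilde L]$ directly by computing both sides as differential-plus-multiplication operators and matching coefficients. Since $\widetilde L = -i\partial_x - q$ and $q$ depends on $t$ only through multiplication, the left-hand side is simply $-\dot q$, the operator of multiplication by $\partial_t q(t,x)$. So the whole content is to show that the commutator $[\widetilde P,\widetilde L]$ equals the multiplication operator $M q' + c q' + 2qq'$; the equivalence \eqref{IllLax} then follows because two multiplication operators agree iff their symbols agree pointwise, and $q(t)$ ranges over $H^\infty$.

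The computation of $[\widetilde P,\widetilde L]$ I would organize by splitting $\widetilde P = P_1 + P_2 + P_3 + P_4$ with $P_1 = -c\partial_x$, $P_2 = -i(Mq)$, $P_3 = -i\partial_x^2$, $P_4 = -(\partial_x q + q\partial_x)$, and likewise $\widetilde L = L_0 + L_q$ with $L_0 = -i\partial_x$ and $L_q = -q$. Then $[\widetilde P,\widetilde L] = \sum_{j} [P_j, L_0] + \sum_j [P_j, L_q]$. Since $L_0 = -i\partial_x$ is a Fourier multiplier, it commutes with $P_1$, $P_2$ (a multiplication operator commuted against $\partial_x$ gives a multiplication operator: $[-i\partial_x, f] = -i f'$), and $P_3$; the only nonzero bracket in the first group is $[P_4, L_0]$. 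In the second group, $[P_1, L_q] = [-c\partial_x, -q] = c q'$, $[P_2, L_q] = [-i(Mq), -q] = 0$ (two multiplication operators commute), $[P_3, L_q] = [-i\partial_x^2, -q] = i(q'' + 2q'\partial_x)$, and $[P_4, L_q] = [-(\partial_x q + q\partial_x), -q]$, which after expanding $\partial_x q = q\partial_x + q'$ and using $[\partial_x, q] = q'$ produces multiplication and first-order terms. The key cancellation to watch: the $\partial_x^2$-order terms must vanish entirely (they do, since neither $\widetilde L$ nor $\dot{\widetilde L}$ has such a term), the first-order $\partial_x$ terms from $[P_3,L_q]$, $[P_4,L_0]$, and $[P_4,L_q]$ must cancel among themselves, and the surviving zeroth-order (multiplication) part must assemble into exactly $-(Mq' + cq' + 2qq')$ — matching $-\dot q$ via \eqref{BEQ}. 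The term $-i(Mq)$ in $\widetilde P$ is precisely engineered so that $[-i\partial_x, -i(Mq)] = -i\cdot(-i)(Mq)' = -(Mq)' = -Mq'$ feeds the dispersive contribution; this is the one place the multiplier $M$ enters.

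The main obstacle — really the only delicate point — is bookkeeping the first-order ($\partial_x$) terms so they genuinely cancel: one must carefully expand $\partial_x q$ and $q\partial_x$ as operators, consistently use $[\partial_x, f] = f'$ and $[\partial_x^2, f] = f'' + 2f'\partial_x$, and track signs through the four brackets. I expect the $2q'\partial_x$ from $i[-i\partial_x^2,-q]$ (i.e. from $[P_3,L_q]$) to be exactly killed by contributions from $[P_4,L_0]$ and $[P_4,L_q]$, while the leftover zeroth-order pieces collect as $2qq' = (q^2)'$ plus $Mq' + cq'$. Once the symbol identity is in hand, the $\iff$ is immediate: if $q$ solves \eqref{BEQ} the two sides match by construction, and conversely, equality of the multiplication operators forces $\partial_t q = -Mq' - cq' - 2qq'$ pointwise in $x$ for each $t$, i.e. $q$ solves \eqref{BEQ}. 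No functional-analytic subtlety arises here because $q \in H^\infty$ keeps every operator in sight acting boundedly between Sobolev spaces and all manipulations are justified on, say, Schwartz functions (or finite Fourier series on $\T$), which are a core.
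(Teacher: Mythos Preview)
Your approach is exactly the paper's: compute $[\widetilde P,\widetilde L]$ directly and verify it is the multiplication operator $cq'+Mq'+2qq'$, which equals $-\dot q = \tfrac{d}{dt}\widetilde L$ precisely when \eqref{BEQ} holds. The paper condenses this to two lines, writing $[\widetilde P,\widetilde L] = cq' + Mq' + i[\partial_x^2,q] + i[\partial_x q + q\partial_x,\partial_x] + [\partial_x q+q\partial_x,q]$ and observing the last three terms collapse to $2qq'$.

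Two bookkeeping slips to correct before this is clean. First, you assert $L_0$ commutes with $P_2$, yet in the same breath (and again later) you compute $[-i\partial_x,-i(Mq)]=-(Mq)'\neq 0$; in fact $[P_2,L_0]=Mq'$ is exactly the dispersive contribution, so it belongs in your ``first group'' and is not zero. Second, the surviving multiplication part of the commutator is $+(Mq'+cq'+2qq')$, not its negative, since $-\dot q = Mq'+cq'+2qq'$ by \eqref{BEQ}; you had this right at the top of your outline and then flipped the sign midway. Neither slip affects the strategy, but both would derail the actual computation if carried through as written.
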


\begin{proof} We simply compute:
\begin{align*}
[\widetilde P,\, \widetilde L] &= c q'  + Mq' + i [\partial_x^2,q]  + i[\partial_x q + q \partial_x, \partial_x] + [\partial_x q + q \partial_x,q] \\
&= c q'  + Mq' + 2qq' .
\end{align*}
The operators on this last line are those of multiplication by the given functions.  The claim \eqref{IllLax} follows immediately.
\end{proof}

The claim in Lemma~\ref{L:IllLP} remains valid if $\widetilde P$ is replaced by the simpler operator
$$
\widetilde P(t)  - i \tilde L^2 = -c \partial_x - i(Mq(t)) - i q(t)^2;
$$
however, this would take us further from our main goal, namely, verifying \eqref{Lax}.

We now turn to verifying that \eqref{LP} is indeed a valid Lax pair for \eqref{ILW}.  By contrast with \eqref{faux LP}, the Lax pair \eqref{LP} does provide meaningful conserved quantities for \eqref{ILW}.  In fact, the next section is devoted to showing that these conserved quantities suffice to prove Theorem~\ref{T1}.  

\begin{theorem}\label{T:Lax}  For a smooth trajectory $q:\R\to H^\infty(\mathcal M)$,
\begin{align}
\text{$q(t)$ solves \eqref{ILW}} \iff \tfrac{d}{dt} L(t) = [P(t),\, L(t) ].    \label{Lax'}    
\end{align}
\end{theorem}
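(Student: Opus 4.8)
The plan is to prove \eqref{Lax'} by directly computing the commutator $[P(t),L(t)]$ and showing that, for \emph{every} smooth trajectory $q$, it equals multiplication by the function $Tq'' + \tfrac1h q' + 2qq'$, with no reference to the equation. Since $\tfrac{d}{dt}L(t) = -\partial_t q(t,\cdot)$ acts as multiplication by $-\partial_t q$, this immediately yields the equivalence: $\tfrac{d}{dt}L=[P,L]$ is then precisely $-\partial_t q = Tq'' + \tfrac1h q' + 2qq'$, i.e.\ \eqref{ILW}, and conversely. Concretely, I would write $L = L_0 - q$ and $P = P_0 - iTq' - (\partial_x q + q\partial_x)$, where $L_0 = -i\partial_x + \tfrac1{2h}(e^{2ih\partial_x}-1)$ and $P_0 = -\tfrac1h\partial_x - i\partial_x^2 + \tfrac1h\partial_x e^{2ih\partial_x}$ are Fourier multipliers, and expand $[P,L]$ by bilinearity into a short list of commutators. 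These operator manipulations are to be read through the Fourier symbols (equivalently, tested on a dense class of sufficiently nice functions, e.g.\ trigonometric polynomials on $\T$); the conclusion is unambiguous because the answer is a multiplication operator.

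The local pieces are immediate: $[P_0,L_0]=0$ since both are Fourier multipliers, $[iTq',q]=0$ since both are multiplication operators, and, using $\partial_x q + q\partial_x = q' + 2q\partial_x$, one gets $[\partial_x q + q\partial_x,\,q] = 2[q\partial_x,q] = 2qq'$. For the three cross terms $[P_0,q]$, $[iTq',L_0]$, and $[\partial_x q + q\partial_x,\,L_0]$ the only tools needed are $[\partial_x,f]=f'$ for a multiplication operator $f$ and the shift commutator $[e^{2ih\partial_x},f] = \bigl(e^{2ih\partial_x}f - f\bigr)e^{2ih\partial_x}$, where $e^{2ih\partial_x}f$ denotes multiplication by the (complex-shifted) function. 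Carrying these out and collecting terms, the first-order terms of the form $q'\partial_x$ arising from $[P_0,q]$ and from $[\partial_x q + q\partial_x,\,L_0]$ cancel, and likewise all terms of the form (function)$\,\cdot\partial_x e^{2ih\partial_x}$ cancel; what remains is multiplication by $Tq'' + \tfrac1h q'$ together with one residual operator, namely multiplication by $-\tfrac1{2h}\bigl(1 + e^{2ih\partial_x}\bigr)q' + \tfrac{i}{2h}\bigl(e^{2ih\partial_x}-1\bigr)(Tq')$ composed with $e^{2ih\partial_x}$.

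The crux of the argument is that this residual vanishes identically. This is exactly the statement that $\bigl(1+e^{2ih\partial_x}\bigr)\partial_x = i\bigl(e^{2ih\partial_x}-1\bigr)T\partial_x$ as Fourier multipliers, i.e.\ the elementary symbol identity
\[
1 + e^{-2h\xi} = i\bigl(e^{-2h\xi}-1\bigr)\cdot i\coth(h\xi),
\]
which follows from $\coth(h\xi) = \tfrac{1+e^{-2h\xi}}{1-e^{-2h\xi}}$; the pole of $\coth$ at $\xi=0$ is harmless because the identity is only ever applied to a derivative, whose Fourier transform vanishes at the origin. Granting this, $[P,L]$ is multiplication by $Tq'' + \tfrac1h q' + 2qq'$, and \eqref{Lax'} follows as explained.

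I expect the main obstacle to be purely bookkeeping: correctly propagating the noncommutative shift $e^{2ih\partial_x}$ through the three cross-commutators and confirming the two families of cancellations. The only genuinely structural input — the reason this specific $P$ works — is the $\coth$ symbol identity above; the rest is routine commutator algebra. A secondary point deserving a remark is the operator-theoretic status of the calculation: since $e^{2ih\partial_x}$ is a complex translation, with unbounded symbol $e^{-2h\xi}$, I would either restrict to a suitable dense subspace on which all the relevant operators act, or simply observe that, because every operator involved is either local or a Fourier multiplier, the commutator computation is an algebraic identity whose output is the stated multiplication operator.
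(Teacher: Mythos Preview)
Your proposal is correct and is essentially the paper's approach: both compute $[P,L]$ directly and reduce everything to the single $\coth$ symbol identity you isolate. The paper organizes the computation by first extracting the illusory Lax pair of Lemma~\ref{L:IllLP} (so that only the residual $[i(Tq')+\partial_x q+q\partial_x,\,\tfrac1{2h}e^{2ih\partial_x}]+[\tfrac1h\partial_x e^{2ih\partial_x},\,q]$ needs checking, which it does as a Fourier-kernel identity $K(\xi,\eta)\equiv 0$), whereas you compute from scratch in physical space; the content and the key cancellation are the same.
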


\begin{proof} Recall that $L$ and $P$ are defined in \eqref{LP}, and the Fourier multiplier $T$ is defined in \eqref{T defn}.

Applying Lemma~\ref{L:IllLP} with \(c=\frac1h\) and $M=T\partial_x$, we see that 
\begin{equation*}
\text{$q(t)$ solves \eqref{ILW}} \iff \tfrac{d}{dt} L(t) = [P(t)-\tfrac1h\partial_x e^{2ih\partial_x},\, L(t)- \tfrac1{2h}(e^{2ih\partial_x}-1)].  
\end{equation*}
Thus, it remains to demonstrate the vanishing of 
\begin{align*}
 - [P,\, \tfrac1{2h}e^{2ih\partial_x}] - [\tfrac1h\partial_x e^{2ih\partial_x},\, L] = [ i (Tq') + \partial_x q +q\partial_x ,\, \tfrac1{2h}e^{2ih\partial_x}] + [\tfrac1h\partial_x e^{2ih\partial_x},\, q].
\end{align*}

Let us consider this operator in Fourier variables.  On the line, we obtain an integral kernel that we write as $\tfrac{1}{\sqrt{2\pi}}K(\xi,\eta)$; on the circle, this is a matrix $K(\xi,\eta)$ indexed by $\xi,\eta\in2\pi\Z$.  In either case,
\begin{align*}
K(\xi,\eta)&=i(\xi-\eta)\coth\bigl(h[\xi-\eta]\bigr)\tfrac1{2h}\bigl[e^{-2h\xi} - e^{-2h\eta}\bigr] \widehat q (\xi-\eta)\\
&\quad - (i\xi + i\eta) \tfrac1{2h}\bigl[e^{-2h\xi} - e^{-2h\eta}\bigr] \widehat q (\xi-\eta) \\
&\quad + \tfrac1h\bigl[i\xi e^{-2h\xi} - i\eta e^{-2h\eta}\bigr] \widehat q (\xi-\eta).
\end{align*}
Using the identity
\begin{align*}
\coth\bigl(h[\xi-\eta]\bigr)\bigl[e^{-2h\xi} - e^{-2h\eta}\bigr] = - \bigl[ e^{-2h\xi} + e^{-2h\eta}\bigr],
\end{align*}
one readily checks that $K(\xi,\eta)\equiv 0$.
\end{proof}

\section{A priori bounds and equicontinuity}\label{S:3}

Our first task is to demonstrate the conservation of $\alpha(\kappa;q(t))$ under \eqref{ILW}:

\begin{prop}\label{P:cons}
Let $q(t)$ be a smooth solution to \eqref{ILW} and \(\kappa>0\) be chosen so that
\begin{equation}\label{small init}
\| \sqrt{R_0(\kappa)}\, q(0)\, \sqrt{R_0(\kappa)}\|_{\mathfrak I_2}^2<\tfrac1{36}.
\end{equation}
Then for all \(t\in \R\) we have
\begin{equation*}
\ddt \alpha(\kappa;q(t)) = 0.
\end{equation*}
Consequently,
\begin{equation}\label{I2 equiv}
\| \sqrt{R_0(\kappa)}\, q(t)\, \sqrt{R_0(\kappa)}\|_{\mathfrak I_2}^2\simeq \| \sqrt{R_0(\kappa)}\, q(0)\, \sqrt{R_0(\kappa)}\|_{\mathfrak I_2}^2.
\end{equation}
\end{prop}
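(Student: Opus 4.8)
The plan is to show that $\ddt\alpha(\kappa;q(t))\equiv 0$ as long as the series \eqref{KdValpha} converges, to upgrade this to conservation for all $t$ by a continuity argument, and then to deduce \eqref{I2 equiv} from the two-sided bound in Lemma~\ref{L:C1}. Throughout write $A(t):=\sqrt{R_0(\kappa)}\,q(t)\,\sqrt{R_0(\kappa)}$; let $L_0:=-i\partial_x+\tfrac1{2h}(e^{2ih\partial_x}-1)$ be the free Lax operator, so that $L=L_0-q$ and $R_0=R_0(\kappa)=(L_0+\kappa)^{-1}$; set $R(\kappa):=(L+\kappa)^{-1}$; and let $P_0:=P+iTq'+\partial_x q+q\partial_x$ denote the free part of $P$.

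\emph{Computing the derivative.} Since $q$ is a smooth solution, $t\mapsto q(t)$ is a $C^1$ curve in $H^\infty(\mathcal M)$, hence (by Lemma~\ref{L:HS}) $t\mapsto A(t)$ is a $C^1$ curve in $\mathfrak I_2$; it is self-adjoint because $R_0$ is positive and $q$ is real-valued. By \eqref{small init}, $\|A(0)\|_{\mathfrak I_2}<\tfrac16<\tfrac13$, so Lemma~\ref{L:C1} applies near $t=0$ and, using that $R_0$ is time-independent, gives
\[
\ddt\alpha(\kappa;q(t))=\tr\bigl\{(1-A)^{-1}A\,\dot A\bigr\},\qquad \dot A=\sqrt{R_0}\,\dot q\,\sqrt{R_0},
\]
the rearrangement of the series being legitimate because $\|A\|_{\op}<1$ while $A\,\dot A\in\mathfrak I_1$. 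The Lax identity \eqref{Lax'} is equivalent (via $L=L_0-q$ with $L_0$ time-independent) to the operator identity $[L,P]=\dot q$, in which the right-hand side is multiplication by the function $\dot q=-Tq''-\tfrac1hq'-2qq'$, a bounded operator for smooth $q$. Inserting this and using the elementary resolvent identities $(1-A)^{-1}\sqrt{R_0}=\sqrt{R_0}(1-qR_0)^{-1}$ and its adjoint---which yield $\sqrt{R_0}(1-qR_0)^{-1}qR_0=R(\kappa)-R_0$---together with the cyclicity of the trace, one rewrites the display as
\[
\ddt\alpha(\kappa;q(t))=\tr\bigl\{\bigl(R(\kappa)-R_0\bigr)\,[L,P]\bigr\}.
\]
Here $R(\kappa)-R_0$ is Hilbert--Schmidt and $(R(\kappa)-R_0)\,\dot q$ is trace class, so the right-hand side is a genuine trace; it remains to show that it vanishes.

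\emph{Vanishing of the trace.} The formal skeleton is: split $R(\kappa)-R_0$ into its two summands. Since $R(\kappa)$ commutes with $L$, we have $R(\kappa)[L,P]=[L,R(\kappa)P]$, so $\tr\{R(\kappa)[L,P]\}=0$ as the trace of a commutator. For the $R_0$-summand write $[L,P]=[L_0,P]-[q,P]$; because $L_0$ and $P_0$ are commuting Fourier multipliers, $R_0[L_0,P]=[L_0,R_0P]$ again has zero trace, while $[q,P]=[q,P_0]-[q,\,iTq'+\partial_x q+q\partial_x]=[q,P_0]+(q^2)'$ leaves $\tr\{R_0[q,P_0]\}+\tr\{R_0(q^2)'\}$, both of which vanish because their integral kernels are supported off the diagonal---$[q,P_0]$ because $P_0$ is a Fourier multiplier, and $(q^2)'$ because $\int_{\mathcal M}(q^2)'\,dx=0$. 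The main obstacle is that this splitting is \emph{illegitimate}: $R(\kappa)$, $R_0$, $P$, $P_0$, and the commutators occurring above are all unbounded or non-trace-class---indeed $R_0$ is not trace class, its formal trace diverging logarithmically at high frequency, a feature absent in the KdV analysis of \cite{KVZ}---so the individual traces just listed are ill-defined (each is formally $\infty\cdot0$), and only suitable combinations of them make sense. Carrying out the argument rigorously---keeping $(1-A)^{-1}A$, equivalently $R(\kappa)-R_0$, intact throughout; handling the unbounded operators via a high-frequency truncation and a limiting argument; and tracking how the formally-divergent contributions cancel in pairs---is where the bulk of the work lies. It is also where the fine structure of the Lax pair \eqref{LP} is needed, exactly as in the proof of Theorem~\ref{T:Lax}, through the cancellations involving the $\tfrac1h\partial_x e^{2ih\partial_x}$ and $\tfrac1{2h}(e^{2ih\partial_x}-1)$ terms.

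\emph{Globalization and \eqref{I2 equiv}.} Let $I\ni0$ be the maximal interval on which the series defining $\alpha(\kappa;q(\cdot))$ converges and $\ddt\alpha\equiv0$. On $I$ the quantity $\alpha$ is constant, so the self-adjoint case of Lemma~\ref{L:C1} yields $\|A(t)\|_{\mathfrak I_2}^2\le3\alpha(t)=3\alpha(0)\le2\|A(0)\|_{\mathfrak I_2}^2<\tfrac1{18}$ for $t\in I$, and hence $\|A(t)\|_{\mathfrak I_2}<\tfrac13$ on $\overline I$. If $I$ had a finite endpoint $t_*$, then Lemma~\ref{L:C1} together with the computation above (valid wherever $\|A\|_{\mathfrak I_2}<\tfrac13$) would extend $I$ past $t_*$, contradicting maximality; thus $I=\R$ and $\alpha(\kappa;q(t))=\alpha(\kappa;q(0))$ for all $t\in\R$. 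Finally, combining $\tfrac13\|A(t)\|_{\mathfrak I_2}^2\le\alpha(t)=\alpha(0)\le\tfrac23\|A(0)\|_{\mathfrak I_2}^2$ with the same chain of inequalities with $0$ and $t$ interchanged gives $\|A(t)\|_{\mathfrak I_2}^2\simeq\|A(0)\|_{\mathfrak I_2}^2$, which is \eqref{I2 equiv}.
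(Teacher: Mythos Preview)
Your globalization argument and the deduction of \eqref{I2 equiv} match the paper's. The gap is in the ``vanishing of the trace'' step, and you name it yourself: the splitting of $\tr\{(R(\kappa)-R_0)[L,P]\}$ into pieces is illegitimate because none of the individual pieces is trace class, and you do not actually carry out the regularization/limiting argument you describe. As written, the proof is a heuristic sketch at precisely the point where the work lies.

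The paper sidesteps this difficulty entirely by \emph{not} summing the series into resolvent form and \emph{not} invoking the Lax identity $\dot q=[L,P]$ as such. Instead it keeps the expansion
\[
\ddt\alpha(\kappa;q(t))=\sum_{\ell\geq2}\tr\bigl\{R_0\,\dot q\,(R_0q)^{\ell-1}\bigr\}
\]
(each summand a legitimate trace of a product of Hilbert--Schmidt operators), substitutes the equation $\dot q=-Tq''-\tfrac1hq'-2qq'$ directly, and proves cancellation order by order in $q$. The quadratic contribution $\tr\{R_0(Tq''+\tfrac{q'}{h})R_0q\}=0$ follows from the evenness of $F(\xi;\kappa,h)$ together with the oddness of $\xi\mapsto h\xi^2\coth(h\xi)-\xi$. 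For $\ell\geq2$ one establishes the telescoping identity
\[
\tr\bigl\{R_0\bigl(Tq''+\tfrac{q'}{h}\bigr)(R_0q)^\ell\bigr\}+\tr\bigl\{R_0(2qq')(R_0q)^{\ell-1}\bigr\}=0,
\]
and the key tool is not a frequency truncation but the operator identity
\[
Tq''=[L_0+\kappa,\,iTq']-\tfrac1{2h}q'e^{2ih\partial}-\tfrac1{2h}e^{2ih\partial}q',
\]
which comes from $[e^{2ih\partial},iTq']f=q'e^{2ih\partial}f+e^{2ih\partial}(q'f)$. After this substitution, repeated use of $q'=[\partial,q]$, $[\partial,R_0]=0$, and cyclicity of the trace collapses everything to $-\tr\{R_0[\partial,q^2](R_0q)^{\ell-1}\}$, which exactly cancels the $2qq'$ contribution. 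Every operator involved is bounded or Hilbert--Schmidt at each step, so the trace-class pathologies you flagged never arise. Thus the ``fine structure'' of \eqref{LP} enters not through the commutator $[L,P]$ but through the interaction of $T$ with $e^{2ih\partial}$, echoing the computation in Theorem~\ref{T:Lax} at the level of individual traces rather than summed resolvents.
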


\begin{proof}
Let \(I\) be the largest interval containing \(0\) for which
\[
\| \sqrt{R_0(\kappa)}\, q(t)\, \sqrt{R_0(\kappa)}\|_{\mathfrak I_2}^2<\tfrac19\quad\text{for all \(t\in I\)}.
\]
The interval \(I\) is readily seen to be non-empty by the hypothesis \eqref{small init} and open by continuity of the map \(t\mapsto \| \sqrt{R_0(\kappa)}\, q(t)\, \sqrt{R_0(\kappa)}\|_{\mathfrak I_2}^2\). Applying Lemma~\ref{L:C1}, we see that if \(\alpha(\kappa;q(t))\) is constant on \(I\) then for all \(t\in I\) we have
\begin{align*}
\| \sqrt{R_0(\kappa)}\, q(t)\, \sqrt{R_0(\kappa)}\|_{\mathfrak I_2}^2\leq 3\alpha(\kappa;q(t)) &= 3\alpha(\kappa;q(0))\\
&\leq 2\| \sqrt{R_0(\kappa)}\, q(0)\, \sqrt{R_0(\kappa)}\|_{\mathfrak I_2}^2<\tfrac1{18},
\end{align*}
which suffices to prove that the interval \(I\) is closed, and so comprises all \(t\in \R\).

It remains to prove that \(\alpha(\kappa;q(t))\) is constant on \(I\).  Applying Lemma~\ref{L:C1}, for \(t\in I\) we have
\begin{align}\label{dt KdValpha}
\tfrac{d }{dt} \alpha(\kappa;q(t)) = \sum_{\ell=2}^\infty \tr\Bigl\{ R_0\, \tfrac{d }{dt}q(t)\, \bigl(R_0 q(t)\bigr)^{\ell-1}\Bigr\}.
\end{align}
Here, we have used that \(R_0\frac d{dt}q,R_0q\in \I_2\) to cycle the trace. This is a consequence of \cite[Theorem 4.1]{MR2154153} together with the following facts: (i) as \(q\) is a smooth solution of \eqref{ILW}, we have \(q,\frac d{dt}q\in L^2(\mathcal M)\) and (ii) by Lemma~\ref{L:HS}, the symbol \(\frac1{a_h(\xi)+\kappa}\) of \(R_0(\kappa)\) satisfies
\[
\tfrac1{a_h(\xi)+\kappa}\lesssim \tfrac1{\frac{h\xi^2}{1+h|\xi|}+\kappa},
\]
which lies in \(L^2(\R)\) in the case \(\mathcal M = \R\) and in \(\ell^2(2\pi \Z)\) in the case \(\mathcal M = \T\).

Employing the equation \eqref{ILW} satisfied by $q$ and collecting like powers of $q$, we see that it suffices to show the following:
\begin{align}
\tr\bigl\{  R_0 (Tq'') R_0 \,q\bigr\} + \tr\bigl\{ R_0 \tfrac{q'}{h} R_0 \,q\bigr\}&=0, \label{E:quadratic}\\
\tr\Bigl\{ R_0  \bigl(Tq''+ \tfrac{q'}{h}\bigr)\bigl(R_0 q\bigr)^{l}\Bigr\}  +  \tr\Bigl\{ R_0\bigl(2 q q'\bigr)\bigl(R_0 q\bigr)^{l-1}\Bigr\}&=0, \quad\forall \ell\geq 2.\label{E:telescope}
\end{align}

We recall from Lemma~\ref{L:HS} that the function $\xi\mapsto F(\xi; \kappa,h)$ defined in \eqref{F defn} is even.  As we will now show, this is the key observation for proving \eqref{E:quadratic}.  For concreteness, we provide the details in the line setting. The circle case is a close parallel; one simply replaces integrals by sums.

A straightforward computation reveals that
\begin{align*}
\tr\bigl\{  R_0 (Tq'') R_0 \,q\bigr\} + \tr\bigl\{ R_0 \tfrac{q'}{h} R_0 \,q\bigr\}= \tfrac1{ih}\int_\R F(\xi;\kappa,h)  |\widehat q(\xi)|^2\bigl[ h\xi^2\coth(h\xi) - \xi\bigr]\, d\xi.
\end{align*}
As $q$ is real-valued, the function $\xi\mapsto |\widehat q(\xi)|^2$ is even.  However, $\xi\mapsto h\xi^2\coth(h\xi) - \xi$ is clearly an odd function.  Thus \eqref{E:quadratic} follows from the fact that $\xi\mapsto F(\xi; \kappa,h)$ is even.

We now turn to \eqref{E:telescope}. Writing $q'=[\partial_x,q]$, observing that $[\partial_x,R_0]=0$, and cycling the trace, we deduce
\begin{align}\label{easy}
\tr\Bigl\{ R_0  \bigl(\tfrac{q'}{h}\bigr)\bigl(R_0 q\bigr)^{l}\Bigr\}
&= \tfrac1h\tr\Bigl\{ R_0\, \partial_x \,q\bigl(R_0 q\bigr)^{l}\Bigr\}- \tfrac1h\tr\Bigl\{ R_0 \, q\,\partial_x\bigl(R_0 q\bigr)^{l}\Bigr\} =0.
\end{align}

To continue, we use the rules
\begin{align}\label{chain rule}
e^{2ih\partial_x}(gf) &= (e^{2ih\partial_x}g)\,(e^{2ih\partial_x}f), \\
(e^{2ih\partial_x} - 1) \,iT g &= (1+e^{2ih\partial_x})g,
\end{align}
to deduce that 
\begin{align*}
 [ e^{2ih\partial_x} , \,iTq']f &= \bigl(e^{2ih\partial_x}-1\bigr) (iTq') \cdot e^{2ih\partial_x} f
 	= (1+e^{2ih\partial_x})q' \cdot e^{2ih\partial_x} f \\
&= q' e^{2ih\partial_x} f + e^{2ih\partial_x}(q'f).
\end{align*}
This then yields the operator identity,
\begin{align}\label{444}
(Tq'') f &= [\partial_x, Tq'] f\\
&= \bigl[-i\partial_x + \tfrac1{2h}(e^{2ih\partial_x}-1) +\kappa, \,iTq'\bigr]f - \tfrac1{2h}[ e^{2ih\partial_x} , \,iTq']f \notag\\
&= \bigl[-i\partial_x + \tfrac1{2h}(e^{2ih\partial_x}-1) +\kappa,\, iTq'\bigr] f - \tfrac1{2h}q' e^{2ih\partial_x} f - \tfrac1{2h}e^{2ih\partial_x}(q'f),\notag
\end{align}
and correspondingly,
\begin{align*}
\tr\Bigl\{ R_0  \bigl(Tq''\bigr)\bigl(R_0 q\bigr)^{l}\Bigr\}
&= \tr\Bigl\{ \bigl(iTq'\bigr)\bigl(R_0 q\bigr)^{l}\Bigr\} -\tr\Bigl\{ R_0\bigl(iTq'\bigr)q\bigl(R_0 q\bigr)^{l-1}\Bigr\}\\
&\quad - \tr\Bigl\{ R_0\, \tfrac1{2h}q'e^{2ih\partial_x} \bigl(R_0 q\bigr)^{l}\Bigr\} -\tr\Bigl\{ R_0\, \tfrac1{2h}e^{2ih\partial_x} q' \bigl(R_0 q\bigr)^{l}\Bigr\}.
\end{align*}
Using $[q,  (iTq')]=0$ and cycling the trace, we see that the first two summands here cancel.  Employing the identity $q'=[\partial_x,q]$ and cycling the trace, we then compute as follows:
\begin{align*}
\tr\Bigl\{ R_0  \bigl(Tq''\bigr)\bigl(R_0 q\bigr)^{l}\Bigr\}
&= - \tr\Bigl\{ R_0\, \tfrac1{2h}q'e^{2ih\partial_x} \bigl(R_0 q\bigr)^{l}\Bigr\} -\tr\Bigl\{ R_0\, \tfrac1{2h}e^{2ih\partial_x} q' \bigl(R_0 q\bigr)^{l}\Bigr\}\\
&= - \tr\Bigl\{ R_0\, \tfrac1{2h}\partial_x \,q \,e^{2ih\partial_x} \bigl(R_0 q\bigr)^{l}\Bigr\} + \tr\Bigl\{ R_0\, \tfrac1{2h}q\,\partial_x \,e^{2ih\partial_x} \bigl(R_0 q\bigr)^{l}\Bigr\}\\
&\quad - \tr\Bigl\{ R_0\, \tfrac1{2h}e^{2ih\partial_x} \,\partial_x \,q \bigl(R_0 q\bigr)^{l}\Bigr\} +\tr\Bigl\{ R_0\, \tfrac1{2h}e^{2ih\partial_x}  \,q \,\partial_x\bigl(R_0 q\bigr)^{l}\Bigr\}\\
&=- \tr\Bigl\{ R_0\, \partial_x \,q \,\tfrac1{2h}e^{2ih\partial_x} \bigl(R_0 q\bigr)^{l}\Bigr\} +\tr\Bigl\{ R_0\, \tfrac1{2h}e^{2ih\partial_x}  \,q \,\partial_x\bigl(R_0 q\bigr)^{l}\Bigr\}\\
&= -\tr\Bigl\{ R_0\, \partial_x \,q^2 \bigl(R_0 q\bigr)^{l-1}\Bigr\} - \tr\Bigl\{ R_0\, \partial_x \,q \,(i\partial_x+\tfrac1{2h}-\kappa) \bigl(R_0 q\bigr)^{l}\Bigr\}\\
&\quad +\tr\Bigl\{ q \,\partial_x \bigl(R_0 q\bigr)^{l}\Bigr\} + \tr\Bigl\{ R_0\, (i\partial_x +\tfrac1{2h} -\kappa)  \,q \,\partial_x\bigl(R_0 q\bigr)^{l}\Bigr\}\\
&= -\tr\Bigl\{ R_0\, [\partial_x,q^2] \bigl(R_0 q\bigr)^{l-1}\Bigr\}.
\end{align*}
Claim \eqref{E:telescope} follows from this, the operator identity $ [\partial_x,q^2] =2qq'$, and \eqref{easy}.
\end{proof}

The preliminaries we have gathered so far almost suffice to prove all parts of Theorem~\ref{T1}.  There is just one exception, namely, equicontinuity in $L^2$.  For this result we need to combine the conservation of $\alpha(\kappa;q)$ with the conservation of momentum $M(q)$ defined in \eqref{moment}.  Our next lemma makes the connection between such a combination and $L^2$ equicontinuity.

\begin{lemma} \label{L:F} Let \(h_0>0\). The two functions $\xi\mapsto F(\xi; \kappa,h)$ defined in \eqref{F defn} satisfy
\begin{equation}\label{10:48}
\lim_{\kappa\to \infty} \kappa F(\xi; \kappa,h)= \tfrac 1{2\pi},
\end{equation}
uniformly for \(h\geq h_0\) and $\xi$ in compact subsets of $\R$, respectively \(2\pi\Z\).

Moreover, there are constants $A>0\) and $\kappa_1 = \kappa_1(h_0)>0$ so that if \(h\geq h_0\) then
\begin{align}\label{10:49}
 \kappa F(\xi; \kappa,h)\leq \tfrac1{4\pi}, \qtq{provided} \kappa\geq \kappa_1 \qtq{and} |\xi|\geq A\kappa.
\end{align}
Lastly, there is a constant $C = C(h_0)>0$ so that
\begin{align}\label{equi in}
 \bigl[ 1 + C\kappa^{-1/2}\bigr]M(q) - \pi \kappa \| \sqrt{R_0(\kappa)}\, q\, \sqrt{R_0(\kappa)}\|_{\mathfrak I_2}^2 \gtrsim  \|P_{\geq A\kappa} q\|_{L^2}^2 
\end{align}
for all \(h\geq h_0\) and $\kappa\geq \kappa_1$.  Here $P_{\geq N}$ denotes the sharp projection to frequencies $|\xi|\geq N$.
\end{lemma}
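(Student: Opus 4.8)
The plan is to reduce all three assertions to pointwise estimates on the Fourier multiplier $\xi\mapsto\kappa F(\xi;\kappa,h)$. By Plancherel and \eqref{moment}, $M(q)=\tfrac12\int|\widehat q(\xi)|^2\,d\xi$, and $\|P_{\geq A\kappa}q\|_{L^2}^2=\int_{|\xi|\geq A\kappa}|\widehat q(\xi)|^2\,d\xi$, while \eqref{I2} gives $\|\sqrt{R_0(\kappa)}\,q\,\sqrt{R_0(\kappa)}\|_{\mathfrak I_2}^2=\int F(\xi;\kappa,h)\,|\widehat q(\xi)|^2\,d\xi$ (with integrals replaced by sums over $2\pi\Z$ on $\T$). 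Hence \eqref{equi in} reduces to showing that the weight $w(\xi):=\tfrac12[1+C\kappa^{-1/2}]-\pi\kappa F(\xi;\kappa,h)$ is nonnegative for every $\xi$ and satisfies $w(\xi)\geq\tfrac14$ once $|\xi|\geq A\kappa$. In turn, since $F$ is even, it suffices to prove for $\xi\geq0$ the two facts: (A) $2\pi\kappa F(\xi;\kappa,h)\leq 1+C(h_0)\kappa^{-1/2}$ for all $\xi$, all $h\geq h_0$, and all $\kappa\geq\kappa_1(h_0)$; and (B) $\kappa F(\xi;\kappa,h)\leq\tfrac1{4\pi}$ whenever $|\xi|\geq A\kappa$ (and $h\geq h_0$, $\kappa\geq\kappa_1$). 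Indeed, (B) is \eqref{10:49}; (A) together with a matching lower bound gives \eqref{10:48}; and (A) and (B) together yield \eqref{equi in}. I present the line case; the circle case is the same after replacing integrals by sums over $2\pi\Z$ and comparing such sums to integrals via monotonicity (this also accounts for the factor $\tfrac1{2\pi}$ in \eqref{10:48}, which is the reciprocal lattice spacing).

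For (B), I would feed \eqref{F main bound} directly into the estimate. Enlarging $\kappa_1$ so that $h\kappa\geq h_0\kappa_1\geq 1$ and taking $A\geq1$, for $\xi\geq A\kappa$ we have $h\xi\geq A\,h\kappa\geq 1$, so $\tfrac{h\xi^2}{1+h\xi}\geq\tfrac12\xi$, which makes the first factor in \eqref{F main bound} at most $[\tfrac12\xi+\kappa]^{-1}\leq\tfrac2\xi$; moreover $\sqrt{\tfrac{1+h\kappa}{h\kappa}}\leq\sqrt2$ and $\log\!\big(1+\tfrac{h\xi}{1+h\kappa}\big)\leq\log(1+\tfrac\xi\kappa)$. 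Writing $r=\xi/\kappa\geq A$, this gives $\kappa F(\xi;\kappa,h)\lesssim r^{-1}\big(\sqrt2+\log(1+r)\big)$ with an absolute implied constant. Since $r^{-1}\log(1+r)\to 0$ as $r\to\infty$, choosing $A$ large enough (depending only on that absolute constant) forces $\kappa F(\xi;\kappa,h)<\tfrac1{4\pi}$ for $|\xi|\geq A\kappa$, which is \eqref{10:49}.

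The heart of the matter is (A), and here I would work directly with $2\pi\kappa F(\xi;\kappa,h)=\int_{\mathbb R}\kappa\,[a_h(\eta)+\kappa]^{-1}[a_h(\xi+\eta)+\kappa]^{-1}\,d\eta$ rather than with \eqref{F main bound}, whose constants are too lossy to produce the sharp leading term $1$. I would use three elementary properties of $a_h(\zeta)=\zeta+\tfrac1{2h}(e^{-2h\zeta}-1)$ from \eqref{symbol}: $0\leq a_h(\zeta)\leq\zeta$ for $\zeta\geq0$; $a_h(\zeta)\geq\zeta-\tfrac1{2h}$ for $\zeta\geq0$; and $a_h(\zeta)\gtrsim h^{-1}e^{2h|\zeta|}$ once $h|\zeta|$ exceeds an absolute constant (from $a_h(\zeta)=h^{-1}a(h\zeta)$ and the exponential growth of $a=a_1$ at $-\infty$). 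Assume $\xi\geq0$ and split $\mathbb R$ at $\pm\sqrt\kappa$. On $|\eta|\leq\sqrt\kappa$ I bound both resolvent symbols by $\kappa^{-1}$, contributing at most $2\kappa^{-1/2}$. On $\eta<-\sqrt\kappa$ (where $h|\eta|\geq h_0\sqrt{\kappa_1}$ is large) I discard the second symbol and estimate $\int_{-\infty}^{-\sqrt\kappa}a_h(\eta)^{-1}\,d\eta\lesssim e^{-2h_0\sqrt\kappa}$, which is $\lesssim_{h_0}\kappa^{-1/2}$. On $\eta>\sqrt\kappa$, both $\eta$ and $\xi+\eta$ exceed $\sqrt\kappa$, so $a_h(\eta)+\kappa\geq(1-\tfrac1{2h_0\kappa})(\eta+\kappa)$ and likewise for $\xi+\eta$, whence this piece is at most $(1-\tfrac1{2h_0\kappa})^{-2}\int_0^\infty\kappa\,(\eta+\kappa)^{-1}(\xi+\eta+\kappa)^{-1}\,d\eta=(1-\tfrac1{2h_0\kappa})^{-2}\,\tfrac\kappa\xi\log(1+\tfrac\xi\kappa)\leq(1-\tfrac1{2h_0\kappa})^{-2}$, using $\log(1+u)\leq u$. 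Summing the three pieces and using $(1-\tfrac1{2h_0\kappa})^{-2}\leq1+C_1(h_0)\kappa^{-1}$ for $\kappa\geq\kappa_1$ yields (A) with $C=C(h_0)$. The matching lower bound for \eqref{10:48} comes for free from $a_h(\zeta)\leq\zeta$: $2\pi\kappa F(\xi;\kappa,h)\geq\int_0^\infty\kappa\,(\eta+\kappa)^{-1}(\xi+\eta+\kappa)^{-1}\,d\eta=\tfrac\kappa\xi\log(1+\tfrac\xi\kappa)$, which tends to $1$ uniformly as $\xi/\kappa\to0$, i.e.\ uniformly for $\xi$ in compact sets and $h\geq h_0$; combined with (A) this proves \eqref{10:48}.

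Finally, \eqref{equi in} follows by fixing $C=C(h_0)$ as in (A): then $\pi\kappa F(\xi;\kappa,h)=\tfrac12\big(2\pi\kappa F(\xi;\kappa,h)\big)\leq\tfrac12[1+C\kappa^{-1/2}]$ for every $\xi$, so $w(\xi)\geq0$ throughout; and by (B), $\pi\kappa F(\xi;\kappa,h)\leq\tfrac14$ when $|\xi|\geq A\kappa$, so there $w(\xi)\geq\tfrac12-\tfrac14=\tfrac14$. Plancherel then gives $[1+C\kappa^{-1/2}]M(q)-\pi\kappa\|\sqrt{R_0(\kappa)}\,q\,\sqrt{R_0(\kappa)}\|_{\mathfrak I_2}^2=\int w(\xi)\,|\widehat q(\xi)|^2\,d\xi\geq\tfrac14\int_{|\xi|\geq A\kappa}|\widehat q(\xi)|^2\,d\xi=\tfrac14\|P_{\geq A\kappa}q\|_{L^2}^2$, which is \eqref{equi in} (the circle case being handled by replacing the integral by a sum). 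I expect the main obstacle to be exactly (A): obtaining a \emph{finite} constant in front of $\kappa^{-1/2}$ requires an honest leading term $1$, which is why one cannot afford \eqref{F main bound} and must cut the $\eta$-integral at the precise scale $\sqrt\kappa$ — large enough that the tail beyond $-\sqrt\kappa$ is super-exponentially small uniformly in $h\geq h_0$, yet small enough that the region $|\eta|\lesssim\sqrt\kappa$ costs only $O(\kappa^{-1/2})$.
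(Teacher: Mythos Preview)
Your argument is correct, and the overall plan---reducing \eqref{equi in} to the two pointwise facts (A) and (B) about $\kappa F(\xi;\kappa,h)$ and then verifying \eqref{10:48} as a byproduct---matches the paper's. The details differ. The paper decomposes $F=F_++F_-$ by splitting the $\eta$-integral at $\eta=0$: for $F_+$ it sandwiches $f(\eta,\xi;\kappa,h)$ between $g(\eta,\xi;\kappa)$ and $g(\eta,\xi;\kappa-\tfrac1{2h})$ with $g(\eta,\xi;\varkappa)=[(\eta+\varkappa)(\xi+\eta+\varkappa)]^{-1}$, and for $F_-$ it uses the quadratic lower bound $a_h(\eta)\geq h\eta^2$ (from convexity) to get $\kappa F_-\leq \tfrac1{4\sqrt{h\kappa}}$; this is where the $\kappa^{-1/2}$ in (A) originates. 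You instead split at $\pm\sqrt\kappa$, handle $|\eta|\leq\sqrt\kappa$ trivially (producing your $\kappa^{-1/2}$), and on $\eta<-\sqrt\kappa$ exploit the exponential growth of $a_h$ rather than the quadratic one. For \eqref{10:49} the paper extracts an explicit bound $\kappa F_+\leq\tfrac2{\kappa A}+\tfrac1{2\pi A}\log(1+2A)$ from the $g$-comparison, while you read it off \eqref{F main bound}. Both routes hinge on the same exact integral $\int_0^\infty\kappa(\eta+\kappa)^{-1}(\xi+\eta+\kappa)^{-1}\,d\eta=\tfrac\kappa\xi\log(1+\tfrac\xi\kappa)$, and both handle $\T$ by comparing monotone sums to integrals. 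The paper's split at $0$ is a bit cleaner (it yields an $O(\kappa^{-1})$ error for $F_+$, isolating the $\kappa^{-1/2}$ entirely in $F_-$); your route has the virtue of reusing \eqref{F main bound} rather than reproving part of it.
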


\begin{proof}
We proved in Lemma~\ref{L:HS} that the map \(\xi\mapsto F(\xi;\kappa,h)\) is even.  This allows us to consider only $\xi\geq 0$ for the remainder of the proof.  

We decompose $F(\xi;\kappa,h)=F_+(\xi;\kappa,h)+F_-(\xi;\kappa,h)$ by restricting the sum/integral defining $F$ to $\eta\geq0$ and $\eta<0$, respectively.  We will treat these pieces separately.  We begin with the former, introducing the notations
\begin{align*}
f(\eta,\xi;\kappa,h) = \tfrac1{(a_h(\eta) + \kappa) ( a_h(\eta+\xi) + \kappa )}
	\qtq{and} g(\eta,\xi;\vk) = \tfrac1{(\eta  + \vk)( \eta + \xi + \vk)},
\end{align*}
where \(a_h\) is defined as in \eqref{symbol}.

For $\xi,\eta\geq 0$ and \(\kappa>\frac1{2h}\), we clearly have
\begin{align}\label{10:50}
g(\eta,\xi;\kappa) \leq f(\eta,\xi;\kappa,h) \leq g\bigl(\eta,\xi;\kappa-\tfrac1{2h}\bigr);
\end{align}
moreover,
\begin{align}\label{10:51}
\int_0^\infty \vk \, g(\eta,\xi;\vk) \,\tfrac{d\eta}{2\pi} =  \tfrac\vk{2\pi\,\xi} \log\bigl[ 1 + \tfrac{\xi}{\vk} \bigr]  \longrightarrow \tfrac1{2\pi} \quad \text{as}\quad \vk\to\infty,
\end{align}
uniformly on compact sets of $\xi$.  In order to treat sums, we observe that due to the monotonicity of $\eta\mapsto g(\eta,\xi;\vk)$ and the fact that \(g(0,\xi;\vk) = \frac1{\vk(\xi+\vk)}\), we have
\begin{align}\label{10:52}
 \sum_{0\leq\eta\in2\pi\Z} g(\eta,\xi;\vk) - \tfrac1{\vk(\xi+\vk)}   \leq \int_0^\infty g(\eta,\xi;\vk) \,\tfrac{d\eta}{2\pi} \leq \sum_{0\leq\eta\in2\pi\Z} g(\eta,\xi;\vk);
\end{align}
indeed, we are comparing an integral with two Riemann sum approximations.

Combining all these observations yields that 
\begin{align}\label{F limit}
\kappa F_+(\xi; \kappa,h)\longrightarrow \tfrac 1{2\pi} \quad\text{as}\quad \kappa\to \infty
\end{align}
uniformly for $h\geq h_0$ and $\xi$ in compact subsets of $\R$ or $2\pi \Z$. Moreover, if \(\kappa>\frac1{2h}\) then in both geometries we may bound
\begin{align*}
\kappa F_+(\xi;\kappa,h) &\leq \tfrac \kappa {(\kappa - \frac1{2h})(\xi+\kappa-\frac1{2h})} + \tfrac\kappa{2\pi\,\xi} \log\bigl[ 1 + \tfrac{\xi}{\kappa - \frac1{2h}} \bigr].
\end{align*}
In particular, there exists \(A>0\) so that
\begin{align}\label{10:53}
\kappa F_+(\xi;\kappa,h)\leq \tfrac 2{\kappa A} + \tfrac1{2\pi A}\log(1+2A)\leq \tfrac1{8\pi}, \quad\text{provided \(\kappa\geq \tfrac1h\) and \(|\xi|\geq A\kappa\)}.
\end{align}
Moreover, we may bound
\begin{align}\label{10:53'}
 \kappa F_+(\xi; \kappa,h) - \tfrac1{2\pi}
 \leq \tfrac1\kappa\bigl(4 + \tfrac1{2\pi h}\bigr), \quad\text{for all \(\kappa\geq \tfrac1h\) and \(\xi\geq0\)}.
\end{align}

We turn now to $F_-$, which captures the contribution of $\eta<0$. Recalling that $a_h(\eta)\geq 0$ for all $\eta\in\R$ and noting that \(a_h(0) = 0 = a_h'(0)\) and \(a_h''(\eta) = 2he^{-2h\eta}\geq 2h\) for all \(\eta\leq 0\), we may bound
\[
f(\eta,\xi;\kappa,h) \leq \tfrac1{(h\eta^2 + \kappa)\kappa} \quad \text{for all} \quad\eta\leq 0. 
\]
It follows that in both geometries
\begin{align}\label{10:55}
 \kappa F_-(\xi; \kappa,h) \leq \int_0^\infty \tfrac1{h\eta^2 + \kappa}\,\tfrac{d\eta}{2\pi} = \tfrac1{4\sqrt{\kappa h}},
\end{align}
where we have again used the monotonicity of the map \(\eta\mapsto \tfrac1{(h\eta^2 + \kappa)\kappa} \) for the case that \(\mathcal M = \T\).

The claims \eqref{10:48} and \eqref{10:49} now follow by combining \eqref{F limit} through \eqref{10:55}.

Turning our attention to \eqref{equi in}, we notice that
\eqref{10:53'} and \eqref{10:55} ensure that for all \(h\geq h_0\) we have
$$
\sup_\xi \ 2\pi\kappa F(\xi;\kappa,h) \leq 1 + C \kappa^{-1/2} \quad\text{for $\kappa\geq \tfrac1{h_0}$},
$$
provided $C = C(h_0)>0$ is chosen sufficiently large.  Consequently, all frequencies $\xi\in\R$ make a non-negative contribution to LHS\eqref{equi in}.  Moreover, \eqref{10:49} ensures that frequencies $|\xi|\geq A\kappa$ contribute in the quantitative manner asserted.
\end{proof}

\begin{proof}[Proof of Theorem~\ref{T1}]
As the equation \eqref{ILW} is time-translation invariant, the a priori bounds \eqref{a priori bdds} will follow readily from the estimate
\begin{align}\label{a priori bdds'}
 \sup_{t\in \R}\| q(t) \|_{H^s} \lesssim_{s,h_0} \| q(0) \|_{H^s} \bigl( 1 +  \| q(0) \|_{H^s}\bigr)^{\frac{2|s|}{1-2|s|}}.
\end{align}
For $s=0$, this is a consequence of the conservation of momentum.  

Fix $-\frac12<s<0$ and $0<\delta\leq \frac16$.  Taking \(C_s>0\) as in \eqref{k bigger}, let
\[
\kappa_0(s,h_0)=\max\Bigl\{\tfrac1{h_0},\bigl[ 1 + \tfrac1{\delta^2} C_s \| q(0)\|_{H^s}^2 \bigr]^{\frac{1}{1-2|s|}}\Bigr\}.
\]
The estimate \eqref{small norm} now ensures that \eqref{small init} is satisfied and so Proposition~\ref{P:cons} applies.

For \(\kappa\geq \kappa_0\) and \(h\geq h_0\), the estimate \eqref{F main bound} shows that
\[
F(\xi;\kappa,h)\simeq
\tfrac1{\sqrt{\xi^2 + \kappa^2}}\log\bigl(4 + \tfrac{\xi^2}{\kappa^2}\bigr).
\]
Combining this with \eqref{I2}, \eqref{I2 equiv}, and the fact that
\begin{align*}
(\xi^2+\kappa^2)^s \simeq_s \int_{\kappa}^\infty  \log(4+\tfrac{\xi^2}{\varkappa^2})\frac{1}{\sqrt{\xi^2+\varkappa^2}} \,\varkappa^{2s} \,d\varkappa,
\end{align*}
it follows that for all \(\kappa\geq \kappa_0\) and \(h\geq h_0\) we have
\begin{align}\label{H kappa bdds}
\sup_{t\in\R}\, \|q(t)\|_{H^s_{\kappa}}^2 \lesssim_s \| q(0) \|_{H^s_{\kappa}}^2.
\end{align}
In particular, by taking $\kappa=\kappa_0$, we obtain
\begin{equation*}
\sup_{t\in\R}\, \|q(t)\|_{H^s} \lesssim_s \kappa_0^{|s|} \| q(0) \|_{H^s_{\kappa_0}}\lesssim_{s,h_0} \| q(0) \|_{H^s} \bigl( 1 +  \| q(0) \|_{H^s}\bigr)^{\frac{2|s|}{1-2|s|}},
\end{equation*}
which completes the proof of \eqref{a priori bdds'} in the case $-\frac12<s<0$.

We now turn to the second statement in Theorem~\ref{T1}, namely, the equicontinuity of orbits. When $-\frac12<s<0$, this claim follows readily from \eqref{H kappa bdds} and Lemma~\ref{L:equi}.

It remains to prove the equicontinuity of orbits in $L^2(\mathcal M)$. This is the role of Lemma~\ref{L:F}. We will provide the details in the line setting.

Given $q_0\in Q$, let $q(t)$ denote the solution to \eqref{ILW} with initial data $q(0)=q_0$.  Using the conservation of both momentum and $\alpha$ under the \eqref{ILW} flow, together with Lemmas~\ref{L:dumb}, \ref{L:HS}, and~\ref{L:F}, we may bound
\begin{align*}
M(q&(t)) - \pi\kappa \bigl\| \sqrt{R_0(\kappa)}\, q(t)\, \sqrt{R_0(\kappa)}\bigr\|_{\mathfrak I_2(\R)}^2 \notag\\
&\lesssim \Bigl| M(q(t)) - 2\pi\kappa \alpha(\kappa;q(t))\Bigr|
	+ \sum_{\ell\geq 3} \kappa \bigl\| \sqrt{R_0(\kappa)}\, q(t)\, \sqrt{R_0(\kappa)}\bigr\|_{\mathfrak I_2(\R)}^\ell \notag\\
&\lesssim \Bigl| M(q_0) - 2\pi\kappa \alpha(\kappa;q_0)\Bigr|
	+ \sum_{\ell\geq 3} \kappa \bigl\| \sqrt{R_0(\kappa)}\, q(t)\, \sqrt{R_0(\kappa)} \bigr\|_{\mathfrak I_2(\R)}^\ell \notag\\
&\lesssim \biggl| \int_{-\infty}^\infty \bigl[\tfrac12-\pi\kappa F(\xi;\kappa,h)\bigr]|\widehat q_0(\xi)|^2\,d\xi \biggr|
	+ \sum_{\ell\geq 3} \kappa\bigl\| \sqrt{R_0(\kappa)}\, q(t)\, \sqrt{R_0(\kappa)}\bigr\|_{\mathfrak I_2(\R)}^\ell \notag\\
&\quad + \sum_{\ell\geq 3} \kappa \bigl\| \sqrt{R_0(\kappa)}\, q_0\, \sqrt{R_0(\kappa)}\bigr\|_{\mathfrak I_2(\R)}^\ell \notag\\
&\lesssim \int_{-\infty}^\infty \bigl|1-2\pi\kappa F(\xi;\kappa,h)\bigr||\widehat q_0(\xi)|^2\,d\xi
	+ \sum_{\ell\geq 3} \kappa \bigl[C\kappa^{-1} M(q_0) \bigr]^{\ell/2}  \notag\\
&\lesssim \int_{-\infty}^\infty \bigl|1-2\pi\kappa F(\xi;\kappa,h)\bigr| |\widehat q_0(\xi)|^2\,d\xi + \kappa^{-\frac12} M(q_0)^{\frac32},
\end{align*}
uniformly for $t\in \R$ and \(h\geq h_0\), provided $\kappa\geq\frac1{h_0}$ is chosen sufficiently large depending on $M(q_0)$.  Combining this with \eqref{equi in}, we obtain
\begin{align*}
\sup_{t\in\R} \|P_{\geq A\kappa} q(t)\|_{L^2}^2 &\lesssim_{h_0} \int_{-\infty}^\infty \bigl|1-2\pi\kappa F(\xi;\kappa,h)\bigr| |\widehat q_0(\xi)|^2\,d\xi
	+ \kappa^{-\frac12} M(q)^{\frac32} + \kappa^{-\frac12} M(q),
\end{align*}
for sufficiently large $\kappa$, depending only on $h_0$ and the norm bound on the set $Q$.  Our proof will be finished once we show that the right-hand side converges to zero as $\kappa\to\infty$, uniformly for $q\in Q$ and \(h\geq h_0\).  This duty is easily dispatched:  As $Q$ is equicontinuous in $L^2$,
$$
\lim_{N\to \infty} \,\sup_{q\in Q}\,\|P_{\geq N}q_0\|_{L^2}=0.
$$
On the other hand, as $Q$ is bounded in $L^2$, \eqref{10:48} implies
$$
\lim_{\kappa\to \infty} \,\sup_{h\geq h_0}\,\sup_{q\in Q}\, \int_{|\xi|\leq N} \bigl| 1-2\pi\kappa F(\xi;\kappa,h)\bigr| |\widehat q(\xi)|^2\,d\xi =0
$$
for each fixed $N>0$.
\end{proof}

Employing a similar argument, we conclude with the

\begin{proof}[Proof of Theorem~\ref{T2}]
Let \(C_s>0\) be defined as in \eqref{k bigger}, \eqref{shallow k bigger}. Given \(K>0\), we choose sufficiently small \(h_0 = h_0(s,K)>0\) to ensure that
\[
\tfrac1{h_0}\geq \bigl(1 + \tfrac{h_0^2}{36}C_sK^2\bigr)^{\frac1{1-2|s|}}.
\]
Employing \eqref{small norm}, we then see that whenever \(\|u(0)\|_{H^s}\leq K\) and \(0<h\leq h_0\) the estimate \eqref{small init} is satisfied for all \(\kappa>\frac1h\).  We wish to consider the potentially larger range $\kappa\geq \mu_0^2h$ with
\[
\mu_0^2 = \bigl[ 1 + \tfrac1{36} C_s \|u(0)\|_{H^s}^2 \bigr]^{\frac{1}{\frac32-|s|}},
\]
as on LHS\eqref{shallow k bigger}.  This choice of $\mu_0$ ensures that  \eqref{small init} remains true for $\mu_0^2h\leq\kappa\leq \frac1h$ and  \(0<h\leq h_0\).

From \eqref{F main bound}, we see that
\[
F(\xi;\kappa,h)\simeq \begin{cases}\displaystyle\bigl[\tfrac{h\xi^2}{1+h|\xi|} +\kappa\bigr]^{-1}\bigl[\tfrac{1}{\sqrt{h\kappa}}+ \log\bigl(4 + h^2\xi^2\bigr)\bigr]&\text{if \(0<\kappa\leq \frac1h\),}\vspace{1em}\\\displaystyle\frac1{\sqrt{\xi^2 + \kappa^2}}\log\bigl(4 + \tfrac{\xi^2}{\kappa^2}\bigr)&\text{if \(\kappa>\frac1h\).}\end{cases}
\]
Given \(\mu\geq \mu_0\) and \(0<h\leq h_0\), we then either have \(\mu > \frac1h\), in which case
\[
\int_\mu^\infty F(\xi;\kappa,h)\tfrac{\kappa^{2s}}{h^2}\,d\kappa\simeq_s \tfrac1{h^2}(\xi^2 + \mu^2)^s,
\]
or we have \(\mu\leq \frac1h\), in which case
\[
\int_{\mu^2h}^{\frac1{h}} F(\xi;\kappa,h) \bigl(\tfrac1 h\bigr)^{s+\frac32} \kappa^{s+\frac12} \,d\kappa + \int_{\frac1{h}}^\infty F(\xi;\kappa,h) \tfrac{\kappa^{2s}}{h^2} \,d\kappa \simeq_s \tfrac1{h^2}(\xi^2 + \mu^2)^{s},
\]
where the first integral is readily bounded by considering the cases \(|\xi|\leq \mu\), \(\mu<|\xi|\leq \frac1h\), and \(|\xi|>\frac1h\) separately.

Applying Proposition~\ref{P:cons} as in the proof of Theorem~\ref{T1}, we obtain the estimate
\[
\sup_{t\in \R}\|u(t)\|_{H_\mu^s}^2\lesssim_s \|u(0)\|_{H_\mu^s}^2,
\]
for all \(\|u(0)\|_{H^s}\leq K\), \(0<h\leq h_0\), and \(\mu\geq \mu_0\). The a priori bounds \eqref{a priori bdds''} now follow from taking \(\mu =\mu_0\), and the equicontinuity from applying Lemma~\ref{L:equi}.
\end{proof}

\end{document}